\documentclass{article}
\usepackage{amssymb,amsmath,amsfonts, amsthm, amscd}
\usepackage{arydshln}
\usepackage{float}
\usepackage{graphicx}
\usepackage{caption}
\usepackage{rotating}
\usepackage{pdflscape}
\usepackage{tikz}
\usepackage[utf8]{inputenc}
\usepackage{booktabs}
\usepackage{lipsum}
\usepackage{multirow}
\usepackage{enumitem}
\usepackage{subcaption}
\usepackage{multirow}
\usepackage{epsfig,amsthm,amsmath,amsfonts,amssymb}
\usepackage{fancyhdr,multicol,color,xcolor}
\usepackage[english]{babel}
\usepackage[utf8]{inputenc}
\usepackage{algorithm}
\usepackage[noend]{algpseudocode}

\lhead{}
\chead{}
\rhead{}

\numberwithin{equation}{section} \numberwithin{equation}{section}

\oddsidemargin .0in \evensidemargin .0in \textwidth 6 in
\topmargin-.25in \textheight 22cm
\footskip2\baselineskip
\columnseprule0.1pt 
\oddsidemargin0pt
\parskip 0.25\baselineskip
\parindent 3mm
\label{sec.sub.gdn}

\newtheorem{definition}{Definition}
\newtheorem{corollary}{Corollary}
\newtheorem{ex}{Example}
\newtheorem{theorem}{Theorem}

\newtheorem{remark}{Remark}

\numberwithin{equation}{section}

\begin{document}

	\title{\bf  
		Solving linear systems over idempotent semifields through $LU$-factorization}
		\author{{Sedighe Jamshidvand\textsuperscript{$a$}, Shaban Ghalandarzadeh\textsuperscript{$a$}, Amirhossein Amiraslani\textsuperscript{$b,a$}, Fateme Olia\textsuperscript{$a$}}\\
		{\em \small   $^1$ Faculty of Mathematics, K. N. Toosi University of
			Technology, Tehran, Iran} \\
		{\em \small   $^2$ STEM Department, The University of Hawaii-Maui College, Kahului, Hawaii, USA }}
	\maketitle{}	
	
\begin{abstract}
	In this paper, we introduce and analyze a new $LU$-factorization technique for square matrices over idempotent semifields. In particular, more emphasis is put on ``max-plus'' algebra here, but the work is extended to other idempotent semifields as well. We first determine the conditions under which a square matrix has $LU$ factors. Next, using this technique, we propose a method for solving square linear systems of equations whose system matrices are $LU$-factorizable. We also give conditions for an $LU$-factorizable system to have solutions. This work is an extension of similar techniques over fields. Maple procedures for this $LU$-factorization are also included.
\end{abstract}
{\small {\it key words}: Semiring; Idempotent semifield; $LU$-factorization; Linear system of equations }\\[0.3cm]
{\bf AMS Subject Classification:} 16Y60, 12K10, 65F05, 15A06.

\section{Introduction}
Linear algebra provides significantly powerful problem-solving tools. Many problems in the mathematical sciences consist of systems of linear equations in numerous unknowns. Nonlinear and complicated problems often can be treated by linearization and turned into linear problems that are the ``best possible linear approximation''. There are some well-known symbolic and numeric methods and results for the analysis and solution of linear systems of equations over fields and rings. In this work, we propose and analyze a method for solving linear systems of equations over idempotent semifields based on our new $LU$-factorizatin technique. Note that idempotent semifields are a special class of semirings. This work is at the intersection of numerical linear algebra, and pure mathematics.

Semirings are a generalization of rings and lattices. These algebraic structures are similar to rings where division and subtraction are not needed or can not be defined. Semirings have applications in various areas of mathematics and engineering such as formal language, computer science, optimization theory, control theory, etc. (see e.g.~\cite{hebish}, \cite{control}, \cite{key}, \cite{butkovic}).

The first definition of a semiring was given by Vandiver \cite{vandiver} in 1934. A semiring $(S, \oplus, \otimes,0,1)$ is an algebraic structure in which $(S, \oplus)$ is a commutative monoid with an identity element $0$ and $(S,\otimes)$ is a monoid with an identity element 1, connected by ring-like distributivity. The additive identity $0$ is multiplicatively absorbing, and $0 \neq 1$.

Some articles have touched on techniques for $ LU $ decomposition over tropical semirings (see~\cite{tan2}, \cite{inrsln}, \cite{incomp}). In this work, we present a new $LU-$factorization technique for a matrix using its entries. This approach enables us to solve systems of linear equations over idempotent semifields applying our $ LU $ factors. Note that similar to the traditional linear algebra, we set the main diagonal entries of $ L $ to $ 1 $ for simplicity. We also show that the solution through this method is maximal. A maximal solution is determined with respect to the defined total order on the idempotent semifield. We consider the linear system of equations, $AX=b$, where $A=(a_{ij})$ is a square matrix and $b=(b_{i})$ is a column vector over an idempotent semifield.  A semifield is a commutative semiring such that every nonzero element of it is invertible. We say a matrix $A$ has an $LU$-factorization if there exist a lower triangular matrix $L$ and an upper triangular matrix $U$ such that $A=LU$.

In \cite{tan2}, Tan shows that a square matrix $A$ over a commutative semiring has an $LU$-factorization if and only if every leading principle submatrix of $A$ is invertible. Moreover, Sararnrakskul proves in \cite{sara}  that a square matrix $A$ over a semifield is invertible if and only if every row and every column of it contains exactly one nonzero element. The same proposition has been proved for certain semifields , namely Boolean algebra and tropical semirings, in~\cite{rutherford} and~\cite{bacceli}. This means that only diagonal matrices are $LU$-factorizable in  semifields and a fortiori in idempotent semifields.

In traditional linear algebra, $LU$ decomposition is the matrix form of Gaussian elimination. Since our goal here is to take advantage of the decomposition technique for solving a system of linear equations, we define a new $LU-$factorization that can be used for an arbitrary non-diagonal matrix, $A$, in ``max-plus algebra''. We give criteria for the existence of a lower triangular matrix, $L_A$, and an upper triangular matrix, $U_A$, such that $A=L_{A} U_{A}$. We call $L_A$ and $U_A$ the  $LU$ factors of $A$. See Section~3 for more details.

In Section~4, assuming that the matrix $A$ can be written in an $LU$ form, we use the factorization to solve the system of linear equations $AX=b$ for $X$. In fact, $ AX=b $ may be rewritten as $ LUX= b $. To find $ X $, we must first solve the system $ LZ=b $ for $ Z $, where $UX=Z$. Once $ Z $ is found, we solve the system $UX=Z $ for $ X $. We present some theorems on when $LZ=b$ and $UX=Z$ have solutions.

Section $5$ concerns the extension of the proposed $LU-$decomposition and system solution from max-plus algebra to idempotent semifields.

In the appendix of this paper, we give some Maple procedures as follows. Tables $1$ and $2$ are subroutines for finding the determinant of a square matrix, and its permutation matrix, respectively. Table~3 gives a code for calculating matrix multiplications. Tables $5$ and $6$ consist of programs for solving the systems $LX=b$ and $UX=b$, respectively. These procedures are all written for max-plus algebra.\\
We now proceed with some basic definitions and theorems in Section~2.
\section{Definitions and preliminaries}
In this section, we go over some definitions and preliminary notions. For convenience, we use $ \mathbb{N}$, and $\underline{n}$ to denote the set of all positive integers, and the set $\{ 1,2,\cdots,n\}$ for $n \in \mathbb{N}$, respectively.
\begin{definition}A semiring $ (S,\oplus,\otimes, 0,1)$ is an algebraic system consisting of a nonempty set $ S $ with two binary operations,  addition and multiplication, such that the following conditions hold:
	\begin{enumerate}
		\item{$ (S, \oplus) $ is a commutative monoid with identity element $ 0 $};
		\item{ $ (S, \otimes) $ is a monoid with identity element $ 1 $};
		\item{ Multiplication distributes over addition from either side, that is $ a \otimes(b\oplus c)= (a \otimes b) \oplus (a \otimes c )$ and $ (b \oplus c)\otimes a=(b \otimes a) \oplus (c\otimes a) $ for all $ a, b \in S $};
		\item{ The neutral element of $ S $ is an absorbing element, that is $ a \otimes 0 =0= 0 \otimes a  $ for all $ a \in S $};
		\(\)
		\item{ $ 1 \neq 0 $}.
	\end{enumerate}
	A semiring is called commutative if $ a \otimes b = b \otimes a $ for all $ a, b \in S $.
\end{definition}
\begin{definition}	A semiring $S$ is called zerosumfree if $a \oplus b=0$ implies that $ a=0=b $, for any $ a,b \in S $.
\end{definition}
\begin{definition}
	Let $S$ be a semiring. An element $a \in S$ is additively idempotent if and only if $ a \oplus a=a $. A semiring $S$ is called additively idempotent if every element of $S$ is additively idempotent.\\
\end{definition}
\begin{definition}
	A commutative semiring $(S,\oplus,\otimes,0,1)$ is called a semifield if every nonzero element of $S$ is multiplicatively invetrible.
\end{definition}
\begin{definition}
	The semifield $(S,\oplus, \otimes, 0,1)$ is idempotent if it is an additively idempotent, totally ordered, and radicable semifield. Note that the radicability implies the power $a^{q}$ be defined for any $a \in S \setminus \{0\}$ and $q \in \mathbb{Q}$ (rational numbers). In particular, for any non-negative integer $p$ we have
	$$a^{0}=1 ~~~~~~~ a^{p}=a^{p-1}a~~~~~ a^{-p}=(a^{-1})^{p} $$
\end{definition}
The totally ordered operator ``$\leq_{S} $" on an idempotent semifield  defines the following partial order:
$$ a \leq_{S} b \Longleftrightarrow a \oplus b = b,$$
that is induced by additive idempotency. Notice that the last partial order equips addition to an extremal property in the form of the following inequalities
$$ a \leq_{S} a \oplus b ,~~~~~~ b\leq_{S} a \oplus b ,$$
which makes idempotent semifields zerosumfree, since we have $a \geq_{S} 0$ for any $ a  \in S$.\\
The total order defined on $ S $ is compatible with the algebraic operations, that is:
$$ (a \leq_{S} b \wedge c \leq_{S} d) \Longrightarrow a\oplus c \leq_{S} b \oplus d, $$
$$ (a \leq_{S} b \wedge c \leq_{S} d) \Longrightarrow a\otimes c \leq_{S} b \otimes d,$$
for any $ a,b,c,d \in S $.

Throughout this article, we consider the notation $``\geq_{S}" $ as the converse of the order $ ``\leq_{S}" $, which is a totally ordered operator satisfying $ a\geq_{S} b $ if and only if $ b\leq_{S} a $, for any $ a, b \in S $. Furthermore, we use  $ ``a <_{S} b" $ whenever $ a\leq_{S} b $ and $ a \neq b$. The notation $ ``>_{S} " $ is defined similarly.
\begin{ex}
	Tropical semirings are important cases of idempotent semifields that we consider in this work. Examples of tropical semirings are as follows.
	\begin{align*}
	\mathbb{R}_{\max, +}=(\mathbb{R} \cup \{-\infty\}, \max,+,-\infty,0),~\\
	\mathbb{R}_{\min, +}=(\mathbb{R} \cup \{+\infty\}, \min,+,+\infty,0),~~\\
	\mathbb{R}_{\max,\times}=(\mathbb{R_{+}} \cup \{0\}, \max,\times,0,1),~~~~~~\\
	\mathbb{R}_{\min,\times}=(\mathbb{R_{+}} \cup \{+\infty\}, \min,\times,+\infty,1),\\
	\end{align*}
	where $\mathbb{R}$ is the set of real numbers, and $\mathbb{R_{+}}=\{ x \in \mathbb{R} \vert x > 0 \}$. This work particularly concerns $\mathbb{R}_{\max, +}$	that is called ``$\max-\rm plus$ algebra" whose additive and multiplicative identities are $-\infty$ and $0$, respectively. Note further that the multiplication  $ a \otimes b^{-1} $ in ``$\max-\rm plus$ algebra" means that $ a + (-b) =a-b $, where $ ``+" ,~``-" $ and $ -b $ denote the usual real numbers addition , subtraction and the typical additively inverse of the element $ b $, respectively. Moreover, the defined total order on ``$\max-\rm plus$ algebra" is the standard less than or equal relation $ `` \leq " $ over $\mathbb{R}$.
\end{ex}
The semifields $\mathbb{R}_{\max, +}$, $\mathbb{R}_{\min, +}$, $\mathbb{R}_{\max,\times}$ and $\mathbb{R}_{\min,\times}$ are isomorphic to one other. Figure~\ref{fig1} shows the isomorphism maps for these semifields.
\begin{figure}[hbt!]\centering
	\begin{tikzpicture}
	\draw[->](0,0.2)--(3,0.2);
	\draw[->](3,0)--(0,0);
	\draw (1.4,.4) node{$y = -x$};
	\draw (1.4,-.4) node{$y = -x$};
	\draw[->](4.2,-0.4)--(4.2,-3.4);
	\draw[->](4,-3.4)--(4,-.4);
	\draw (4.8,-1.9) node{$y = e^x$};
	\draw (3.3,-1.9) node{$y = ln x$};
	\draw (4,0.1) node{$\mathbb{R}_{\min, +}$};
	\draw[->](-.4,-.4)--(-.4,-3.4);
	\draw[->](-.6,-3.4)--(-.6,-.4);
	\draw (-.8,0.1) node {$\mathbb{R}_{\max, +}$};
	\draw (-1.3,-1.9) node{$ y = ln x$};
	\draw (.4,-1.9) node{$y = e^x$};
	\draw (-.8,-3.8) node {$\mathbb{R}_{\max,\times }$};
	\draw (4,-3.8) node {$\mathbb{R}_{\min, \times}$};
	\draw (1.4,-3.5) node{$y = 1/x$};
	\draw (1.4,-4.3) node{$y = 1/x$};
	\draw[->](0,-3.9)--(3,-3.9);
	\draw[->](3,-3.7)--(0,-3.7);
	\end{tikzpicture}
	\caption{Isomorphism maps for tropical semirings} \label{fig1}
\end{figure}

Let $S$ be a commutative semiring. We denote the set of all $m \times n$ matrices over $S$ by $M_{m \times n}(S)$.
For $A \in M_{m \times n} (S)$, we denote by $a_{ij}$ and $A
^{T}$ the $(i,j)$-entry of $A$ and the transpose of $A$, respectively.\\
For any $A, B \in M_{m \times n}(S)$, $C \in M_{n \times l}(S)$ and $\lambda \in S$, we define:
$$ A+B = (a_{ij} \oplus b_{ij})_{m \times n},$$
$$ AC=(\bigoplus_{k=1}^{n} a_{ik} \otimes b_{kj})_{m \times l},$$
and $$\lambda A=(\lambda \otimes a_{ij})_{m\times n}.$$

It is easy to verify that $M_{n}(S):=M_{n \times n}(S)$ forms a semiring with respect to the matrix addition and the matrix multiplication. Take matrices $A ,B \in M_{n}(S)$. We say that $A \leq B$ if $a_{ij} \leq b_{ij}$ for every $i, j \in \underline{n}$.\\
Let $A \in M_{n}(S)$, $\mathcal{S}_{n}$ be the symmetric group of degree $n \geq 2$ , and $\mathcal{A}_{n}$ be the alternating group on $n$ such that
\begin{center}
	$\mathcal{A}_{n}= \{ \sigma \vert \sigma \in \mathcal{S}_{n} ~\text{and}~\sigma \text{~is~an~even~permutation}\}$.
\end{center}
The positive determinant, $\vert A \vert ^{+}$, and negative determinant, $\vert A \vert ^{-}$, of $A$ are
$$\vert A \vert ^{+}=\bigoplus_{\sigma \in \mathcal{A}_{n}} \bigotimes_{i=1}^{n} a_{i\sigma (i)},$$
and
$$\vert A \vert ^{-}= \bigoplus_{\sigma \in \mathcal{S}_{n} \backslash \mathcal{A}_{n}} \bigotimes_{i=1}^{n} a_{i\sigma (i)}. $$
Clearly, if $S$ is a commutative ring, then $\vert A \vert=\vert A \vert ^{+} - \vert A \vert ^{-}$.
\begin{definition}
	Let $S$ be a semiring. A bijection $\varepsilon$ on $S$ is called an $\varepsilon$-function of $S$ if $\varepsilon(\varepsilon(a))=a$, $\varepsilon(a \oplus b)= \varepsilon(a) \oplus \varepsilon(b)$, and $\varepsilon(a \otimes b)=\varepsilon(a) \otimes b=a \otimes \varepsilon(b)$ for all $a, b \in S$. Consequently, $\varepsilon(a) \otimes \varepsilon(b)=a \otimes b$ and $\varepsilon(0)=0$.\\
	The identity mapping: $a \mapsto a$ is an $\varepsilon$-function of $S$ that is called the identity $\varepsilon$-function.
\end{definition}
\begin{remark}
	Any semiring $S$ has at least one $\varepsilon$-function since the identical mapping of $S$ is an $\varepsilon$-function of $S$. If $S$ is a ring, then the mapping : $a \mapsto -a$ $( a \in S)$ is an $\varepsilon$-function of $S$.
\end{remark}
\begin{definition}
	Let $S$ be a commutative semiring with an $\varepsilon$-function $\varepsilon$, and $A \in M_{n}(S)$. The $\varepsilon$-determinant of $A$, denoted by $det_{\varepsilon}(A)$, is defined by
	\begin{center}
		$$det_{\varepsilon}(A)= \bigoplus_{\sigma \in \mathcal{S}_{n}}\varepsilon ^{\tau(\sigma)}(a_{1\sigma(1)} \otimes a_{2\sigma(2)} \otimes \cdots \otimes a_{n\sigma(n)})$$
	\end{center}
	where $\tau(\sigma)$ is the number of the inversions of the permutation $\sigma$, and $\varepsilon^{(k)}$ is defined by $\varepsilon^{(0)}(a)=a$ and $\displaystyle{\varepsilon^{(k)}(a)=\varepsilon^{k-1}(\varepsilon(a))}$ for all positive integers $k$.
	Since $\varepsilon^{(2)}(a)=a$, $det_{\epsilon}(A)$ can be rewritten in the form of
	\begin{center}
		$det_{\varepsilon}(A)=\vert A \vert ^{+} \oplus \varepsilon(\vert A \vert^{-})$.
	\end{center}
	In particular, for $S=\mathbb{R}_{\max, +}$ with the identity $\varepsilon$-function $\varepsilon$, we  have $det_{\varepsilon}(A)=\max(\vert A \vert ^{+}, \vert A \vert ^{-})$.
\end{definition}
Let $(S,\oplus, \otimes, 0,1)$ be an idempotent semifield, $A \in M_{n}(S)$, and $b \in S^{n}$ be a column vector. Then the $i-$th equation of the linear system $AX=b$ is
$$ \bigoplus_{j=1}^{n}( a_{ij} \otimes x_{j})= (a_{i1}\otimes x_{1})\oplus(a_{i2}\otimes x_{2}) \oplus \cdots \oplus (a_{in} \otimes x_{n}) =b_{i}.$$
Especially, the $i-$th equation of the linear system over $\mathbb{R}_{\max, +}$ is
$$\max (a_{i1}+x_{1}, a_{i2}+x_{2}, \cdots, a_{in}+x_{n})=b_{i}.$$
\begin{definition}
	Let $b\in S^{n}$. Then $ b $ is called a regular vector if it has no zero element.
\end{definition}
Without loss of generality, we can assume that $b$ is regular in the system $AX=b$. Otherwise, let $b_{i}=0$ for some $i \in \underline{n}$. Then in the $i-$th equation of the system, we have $a_{ij}\otimes x_{j}=0$ for any $j \in \underline{n}$, since $S$ is zerosumfree. As such, $x_{j}=0$ if $a_{ij}\neq 0$.  Consequently, the $i-$th equation can be removed from the system together with every column $A_{j}$ where $a_{ij} \neq 0$, and the corresponding $x_{j}$ can be set to $0$.
\begin{definition}
	A solution $X^{*}$ of the system $AX=b$ is called maximal if $X \leq X^{*}$ for any solution $X$.
\end{definition}
\begin{theorem}\label{invert}(See~\cite{sara})
	Let $S$ be a semifield and $A \in M_{n}(S)$. Then $A$ is invertible if and only if every row and every column of $A$ contains exactly one nonzero element.
\end{theorem}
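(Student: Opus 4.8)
The plan is to prove both implications, with the forward direction (invertibility $\Rightarrow$ generalized-permutation structure) being the substantive one. Throughout I would exploit that the semifield $S$ is \emph{zerosumfree}: the excerpt records that the idempotent semifields under consideration satisfy $a \oplus b = 0 \Rightarrow a = b = 0$, and this is precisely the property that makes the statement true (over an ordinary field it fails, e.g.\ for a unitriangular matrix), so I would make this hypothesis explicit or invoke the convention under which \cite{sara} defines semifields, rather than relying on the bare ``every nonzero element is invertible'' definition. I write $R_i = \{ j \in \underline{n} : a_{ij} \neq 0 \}$ for the support of the $i$-th row of $A$.

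For the easy implication ($\Leftarrow$), suppose every row and every column of $A$ has exactly one nonzero entry. The row condition gives a function $\sigma : \underline{n} \to \underline{n}$ with $a_{i\sigma(i)} \neq 0$ and $a_{ij} = 0$ for $j \neq \sigma(i)$, and the column condition forces $\sigma$ to be a bijection, i.e.\ $\sigma \in \mathcal{S}_n$. Since $S$ is a semifield each nonzero $a_{i\sigma(i)}$ is invertible, so I would define $B$ by $b_{\sigma(i), i} = a_{i\sigma(i)}^{-1}$ and $b_{jk} = 0$ otherwise, and check directly from the matrix-multiplication formula that $AB = BA = I$; the off-diagonal entries vanish because in each contracted sum at most one factor is nonzero.

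For the main implication ($\Rightarrow$), let $B$ be a two-sided inverse, so $AB = BA = I$. Writing $C_k = \{ j : b_{jk} \neq 0 \}$ for the support of the $k$-th column of $B$, the diagonal relation $(AB)_{ii} = 1 \neq 0$ forces $R_i \cap C_i \neq \emptyset$ (in particular every row of $A$ is nonempty), while for $i \neq k$ the off-diagonal relation $(AB)_{ik} = \bigoplus_j a_{ij} \otimes b_{jk} = 0$ together with zerosumfreeness forces \emph{each} term to vanish, i.e.\ $R_i \cap C_k = \emptyset$. The crux is then a counting argument: if some index $j$ lay in two distinct supports $R_i$ and $R_{i'}$, then $j \notin C_k$ for every $k$, so the $j$-th row of $B$ would be identically zero, contradicting $(BA)_{jj} = 1 \neq 0$. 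Hence each $j$ belongs to at most one $R_i$, giving $\sum_i |R_i| \le n$; combined with $|R_i| \ge 1$ this yields $|R_i| = 1$ for every $i$. Finally the assignment $i \mapsto$ (the unique element of $R_i$) is an injection $\underline{n} \to \underline{n}$, hence a permutation, which simultaneously shows that every column of $A$ contains exactly one nonzero entry.

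I expect the main obstacle to be this last implication, and within it the step that rules out a vanishing row of $B$: it is what converts the purely ``disjointness'' information coming from zerosumfreeness into the sharp cardinality bound $\sum_i |R_i| \le n$, and it is exactly the same use of zerosumfreeness that blocks the field counterexamples. Everything else (constructing the explicit inverse, and deducing the column statement from the row statement via bijectivity of $\sigma$) is routine once this bound is in hand.
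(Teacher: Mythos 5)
The paper contains no proof of this theorem to compare against: it is imported verbatim from Sararnrakskul et al.~\cite{sara} and used as a black box, so your proposal must be judged on its own merits rather than against an internal argument. On that basis, your proof is correct. The backward direction is routine exactly as you describe; in the forward direction, the chain ``$(AB)_{ii}=1$ forces $R_i\cap C_i\neq\emptyset$'', ``$(AB)_{ik}=0$ for $i\neq k$ forces $R_i\cap C_k=\emptyset$'', and then the counting step ``an index $j$ lying in two row supports $R_i, R_{i'}$ would lie in no column support $C_k$, making row $j$ of $B$ zero and contradicting $(BA)_{jj}=1$'' is sound and yields $|R_i|=1$ with the resulting map a permutation. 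Two refinements would tighten it. First, the passage from ``each term $a_{ij}\otimes b_{jk}$ vanishes'' to ``$R_i\cap C_k=\emptyset$'' silently uses that a semifield has no zero divisors; this deserves one line (a product of two nonzero, hence invertible, elements is invertible and therefore nonzero). Second, your hedge about zerosumfreeness can be discharged rigorously instead of left as a convention: a semifield is either zerosumfree or a field, since $a\oplus b=0$ with $a\neq 0\neq b$ gives $1\oplus(a^{-1}\otimes b)=0$, so $1$ and then every element has an additive inverse, making $S$ a commutative ring in which all nonzero elements are invertible. As the statement visibly fails for fields (your unitriangular example), the theorem's intended scope is precisely the zerosumfree branch of this dichotomy, which covers the idempotent semifields the paper actually works with; stating the dichotomy makes your added hypothesis a consequence rather than an assumption.
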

Let $\sigma \in \mathcal{S}_{n}$ be a permutation. Then $P_{\sigma}$ is the permutation matrix corresponding to $\sigma$. It is a square matrix obtained from the same size identity matrix through a permutation of the rows. It is clear that every row and every column of a permutation matrix contains exactly one nonzero element, therefore it is invertible according to Theorem~\ref{invert}. As a result, the system of equations $AX=b$ and $P_{\sigma}AX=P_{\sigma}b$ have the same solutions.\\
Throughout this paper, without loss of generality, we can assume that
$$det_{\varepsilon}(A)=a_{11}+\cdots+a_{nn};$$
otherwise, there exists a permutation matrix $P_{\sigma}$ such that
$$det_{\varepsilon}(P_{\sigma}A)=(P_{\sigma}A)_{11}+\cdots+(P_{\sigma}A)_{nn}.$$
Let $S$ be a commutative semiring, $A \in M_{n}(S)$ and $V , W \subseteq \underline {n}$. We denote by $A[V\vert W]$ the matrix with row indices indexed by indices in $V$, in an increasing order, and column indices indexed similarly by indices in $W$. The matrix $A[V \vert W]$ is called a submatrix of $A$. Particularly, $A[\underline{i}\vert \underline{i}]$ is called a leading principle submatrix of $A$ for any $i \in \underline{n}$.
\begin{theorem}\label{tanthm}(See~\cite{tan2}) Let $(S, \oplus, \otimes ,0,1)$ be a commuatative semiring. Then for any $A \in M_{n}(S)$, the following statements are equivalent.
	\begin{enumerate}
		\item All the leading principle submatrices of $A$ are invertible.
		\item A has an $LU$-factorization where $L$ is an invertible lower triangular matrix with $1$'s on its main diagonal, and $U$ is an invertible upper triangular matrix with $u_{ii}\in U(S)$ where $U(S)$ denotes the set of all multiplicatively invertible elements of $S$.
	\end{enumerate}
\end{theorem}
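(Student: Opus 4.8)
I would prove the equivalence by treating the two implications separately, and I would begin by isolating one structural fact that does all the work and costs no cancellation: over any commutative semiring, if a lower triangular matrix with $1$'s on its diagonal is invertible, then its inverse is again lower triangular, and symmetrically the inverse of an invertible upper triangular matrix whose diagonal entries lie in $U(S)$ is upper triangular. I would prove the first by solving $LM=I$ for the strictly-above-diagonal entries of $M$ one row at a time: by induction on the row index, the entry $(i,j)$ of $LM$ with $j>i$ reduces, using $\ell_{ik}=0$ for $k>i$ and the inductive vanishing of $m_{kj}$ for $k<i$, to $1\cdot m_{ij}=0$, whence $m_{ij}=0$; the upper triangular statement is the mirror image, reading off the bottom row first and using $u_{nn}\in U(S)$. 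No subtraction is ever invoked, which is exactly what replaces the determinant bookkeeping that is unavailable here.

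Granting this lemma, the implication $(2)\Rightarrow(1)$ is immediate. Writing $A=LU$ and partitioning at block size $k$ as $L=\begin{pmatrix}L_{11}&0\\ L_{21}&L_{22}\end{pmatrix}$ and $U=\begin{pmatrix}U_{11}&U_{12}\\ 0&U_{22}\end{pmatrix}$, the top-left block of the product is $L_{11}U_{11}$, so $A[\underline{k}\,\vert\,\underline{k}]=L_{11}U_{11}$. Since $L^{-1}$ is lower triangular and $U^{-1}$ is upper triangular by the lemma, their leading $k\times k$ blocks $N_{11}$ and $N_{11}'$ satisfy $L_{11}N_{11}=N_{11}L_{11}=I_k$ and $U_{11}N_{11}'=N_{11}'U_{11}=I_k$ (the off-diagonal blocks that would otherwise contribute are forced to vanish by triangularity). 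Hence $L_{11}$ and $U_{11}$ are invertible, and therefore so is their product $A[\underline{k}\,\vert\,\underline{k}]$ for every $k$.

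For $(1)\Rightarrow(2)$ I would induct on $n$, the case $n=1$ being $A=(a_{11})$ with $a_{11}\in U(S)$, factored as $L=(1),\ U=(a_{11})$. For the step, write $A=\begin{pmatrix}A'&c\\ r&a_{nn}\end{pmatrix}$ with $A'=A[\underline{n-1}\,\vert\,\underline{n-1}]$. Every leading principal submatrix of $A'$ is one of $A$, so the inductive hypothesis supplies $A'=L'U'$ with $L',U'$ invertible of the required shapes. I then seek $L=\begin{pmatrix}L'&0\\ \ell&1\end{pmatrix}$ and $U=\begin{pmatrix}U'&u\\ 0&u_{nn}\end{pmatrix}$; matching the four blocks of $LU=A$ forces $u=(L')^{-1}c$ and $\ell=r(U')^{-1}$, both well defined since $L'$ and $U'$ are invertible, and reduces the whole step to the single scalar corner equation $\ell u\oplus u_{nn}=a_{nn}$.

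This corner equation, together with the invertibility of the assembled factors, is where I expect the real difficulty to lie. Over a field or ring one simply sets $u_{nn}=a_{nn}-\ell u$, and multiplicativity of the determinant then makes invertibility of $L$ and $U$ automatic; here neither shortcut exists. The plan is to use the hypothesis that the full matrix $A=A[\underline{n}\,\vert\,\underline{n}]$ is invertible to guarantee that the equation is solvable with an invertible $u_{nn}$, and to recover invertibility of $L$ and $U$ from it. The subtle point is that, absent cancellation, neither solvability of $\ell u\oplus u_{nn}=a_{nn}$ nor invertibility of the factors is free: a completed inverse of $L$ would require an additive inverse of $\ell(L')^{-1}$, which need not exist. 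Thus condition $(1)$ must be shown to force the off-diagonal data into the additively-invertible part of $S$ — precisely the mechanism that, once $S$ is a semifield, collapses $\ell$ and $u$ to $0$ and the factorization to the diagonal case of Theorem~\ref{invert}. Carrying out this subtraction-free invertibility argument is the crux of the proof.
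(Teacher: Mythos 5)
The paper itself never proves this theorem: it is imported verbatim from Tan~\cite{tan2} (hence the ``See~\cite{tan2}'' in the statement), so there is no in-paper proof to compare against and your proposal must stand on its own. Two of its three components do stand. The triangular-inverse lemma is correct: the induction $(LM)_{ij}=l_{ii}\otimes m_{ij}=0$ for $j>i$ (after the earlier terms vanish) is subtraction-free and valid in any semiring, provided the diagonal entries are units so that $l_{ii}\otimes m_{ij}=0$ forces $m_{ij}=0$ --- which is exactly the hypothesis in statement~(2). Granting it, your block argument for $(2)\Rightarrow(1)$ is complete and clean: $A[\underline{k}\,\vert\,\underline{k}]=L_{11}U_{11}$, the leading blocks of $L^{-1}$ and $U^{-1}$ are two-sided inverses of $L_{11}$ and $U_{11}$, and $U_{11}^{-1}L_{11}^{-1}$ inverts the product.

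The genuine gap is the direction $(1)\Rightarrow(2)$, and you concede it yourself: after reducing the induction step to the corner equation $\ell u\oplus u_{nn}=a_{nn}$, you describe a ``plan'' and call the remaining argument ``the crux,'' but never carry it out. What is missing is not a routine verification; it is essentially the whole content of the theorem in this direction. Three things must be extracted from the invertibility of the full matrix $A$: (i) solvability of $u_{nn}\oplus(\ell u)=a_{nn}$ at all --- in a semiring an equation $x\oplus b=a$ generally has no solution, so one must show $\ell u$ is subtractable from $a_{nn}$; (ii) that a solution $u_{nn}$ can be chosen in $U(S)$; and (iii) invertibility of the assembled factors, which by your own lemma forces, for example, every entry of $\ell\otimes(L')^{-1}$ to possess an additive inverse. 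All three amount to showing that condition~(1) pushes the off-diagonal data into the ideal of additively invertible elements of $S$, and nothing in your outline produces that: it has to be harvested from the block equations of $AA^{-1}=A^{-1}A=I$ (e.g.\ $rB'\oplus a_{nn}s=0$ in your partition notation), which is precisely the delicate, subtraction-free bookkeeping that constitutes Tan's proof. Note that the argument cannot be soft, since the claim interpolates between two extremes: when $S$ is a ring it must reproduce classical $LU$-factorization, and when $S$ is zerosumfree it must force $\ell=0$, $u=0$ and collapse $A$ to the diagonal case, as the paper observes after the theorem. As it stands, your proposal proves one implication and only sets up the other.
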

Let $S$ be an idempotent semifield and $A \in M_{n}(S)$. Suppose that $A$  has an $LU$-factorization as mentioned in Theorem~\ref{tanthm}. Then all the leading principle submatrices of $A$ are invertible. Moreover, per Theorem~\ref{invert}, we can obtain $a_{ii} \neq 0$ for any $i \in \underline{n}$. This means that $A$ has an $LU$-factorization if and only if it is a diagonal matrix with nonzero diagonal elements.\\
In what follows, we actually find new $L$ and $U$ factors for certain square matrices which are not necessarily diagonal, and use them to solve linear systems of equations.\\

Note that our main focus in the following two sections is on the ``$\max-\rm plus$ algebra".
\section{LU-factorization}
Take an arbitrary matrix $A=(a_{ij}) \in M_{n}(S)$. Let two matrices $L_{A}$ and $U_{A}$ be as follows.
\begin{equation}\label{L}
L_{A}=(l_{ij}) ; ~~
l_{ij}= \left
\{
\begin{array}{cc}
a_{ij}-a_{jj} & i\geq j;\\
-\infty &  i <j,
\end{array}
\right.	
\end{equation}
and
\begin{equation}\label{U}
U_{A}=(u_{ij}) ; ~~ u_{ij}= \left
\{
\begin{array}{cc}
a_{ij} & i\leq j;\\
-\infty & i > j.
\end{array}
\right.
\end{equation}
We say a matrix $ A \in M_{n}(S)$ has an $LU$-factorization if $A=L_{A}U_{A}$.
For simplicity, we show $L_{A}$ just by $L$ and $U_{A}$ just by $U$ from this point on. Clearly, $L$ and $U$ are lower and upper triangular matrices, respectively.\\

In the following theorem, we present the necessary and sufficient conditions for the existence of the presented $LU$ factors of an arbitrary square matrix $A$.
\begin{theorem}\label{LUable}
	Let $A \in M_{n}(S)$, $n$ be a positive integer such that $det_{\varepsilon}(A)=a_{11}+....+a_{nn}$ and the matrices $L$ and $U$ be defined by~\eqref{L} and~\eqref{U}, respectively. Then $A=LU$ if and only if for any $ 1 < i, j \leq n $ and $i\neq j$,
	\begin{equation}\label{LUcriteria}
	a_{ij}= \max_{k=1}^{r} ( det_{\varepsilon} (A[\{k,i\} \mid \{k,j\}])-a_{kk}),
	\end{equation}
	where $r=min\{i,j\}-1$.
\end{theorem}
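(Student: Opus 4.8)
The plan is to compute the $(i,j)$ entry of the max-plus product $LU$ directly from the definitions~\eqref{L} and~\eqref{U} and match it against $a_{ij}$ entry by entry. In $\mathbb{R}_{\max,+}$ the product reads $(LU)_{ij} = \max_{k=1}^{n}(l_{ik} + u_{kj})$. Since $l_{ik} = -\infty$ for $k > i$ and $u_{kj} = -\infty$ for $k > j$, only the indices $1 \le k \le \min\{i,j\}$ contribute, and for those $l_{ik} + u_{kj} = a_{ik} - a_{kk} + a_{kj}$, so that $(LU)_{ij} = \max_{k=1}^{\min\{i,j\}}\bigl(a_{ik} + a_{kj} - a_{kk}\bigr)$. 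First I would isolate the extreme index $k = \min\{i,j\}$: if $i \le j$ it gives $a_{ii} - a_{ii} + a_{ij} = a_{ij}$, and if $i > j$ it gives $a_{ij} - a_{jj} + a_{jj} = a_{ij}$. Hence one summand always equals $a_{ij}$, so $(LU)_{ij} \ge a_{ij}$ unconditionally, and $A = LU$ becomes equivalent to the family of inequalities $a_{ik} + a_{kj} - a_{kk} \le a_{ij}$ over all $i,j$ and all $k < \min\{i,j\}$.

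Next I would dispose of the entries that hold automatically. When $\min\{i,j\} = 1$ (the first row or the first column) the maximum has the single index $k=1$, which already equals the entry, so equality is free; this is exactly why the criterion is stated only for $1 < i,j$. For the diagonal $i = j$ I would invoke the standing hypothesis $det_{\varepsilon}(A) = a_{11} + \cdots + a_{nn}$: with the identity $\varepsilon$-function the diagonal (identity) permutation realizes $\max_{\sigma}\sum_l a_{l\sigma(l)}$, so comparing it with the transposition $(k\,i)$, which is odd and hence bounded by $|A|^{-} \le det_{\varepsilon}(A)$, yields $a_{ik} + a_{ki} \le a_{kk} + a_{ii}$ for every $k < i$. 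This is precisely the inequality forcing $(LU)_{ii} = a_{ii}$, so the diagonal matches for free. Together these reductions leave only the off-diagonal entries with $1 < i,j$ and $i \ne j$, for which $\min\{i,j\}\ge 2$ makes the condition nonvacuous.

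Finally I would translate the surviving inequalities into the determinant form of~\eqref{LUcriteria}. For $k < \min\{i,j\}$ one has $k<i$ and $k<j$, so the submatrix reads $A[\{k,i\}\mid\{k,j\}] = \left(\begin{smallmatrix} a_{kk} & a_{kj} \\ a_{ik} & a_{ij}\end{smallmatrix}\right)$, whose $\varepsilon$-determinant on $\mathbb{R}_{\max,+}$ is $\max(a_{kk} + a_{ij},\, a_{ik} + a_{kj})$. Subtracting $a_{kk}$ gives $\max\bigl(a_{ij},\, a_{ik} + a_{kj} - a_{kk}\bigr)$, so the right-hand side of~\eqref{LUcriteria}, with $r = \min\{i,j\}-1$, equals $\max\bigl(a_{ij},\, \max_{k=1}^{r}(a_{ik}+a_{kj}-a_{kk})\bigr)$. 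This equals $a_{ij}$ exactly when $a_{ik} + a_{kj} - a_{kk} \le a_{ij}$ for all $k \le r$, which is the condition extracted from $(LU)_{ij} = a_{ij}$ above; hence~\eqref{LUcriteria} holds for every such $i,j$ if and only if $A = LU$.

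The computational core—the explicit evaluation of $(LU)_{ij}$ and of the $2\times 2$ $\varepsilon$-determinant—is routine bookkeeping with the $\max$/$+$ operations. The one genuinely delicate step is the diagonal argument: one must be careful that the \emph{global} optimality of the diagonal permutation, encoded in $det_{\varepsilon}(A) = \sum_l a_{ll}$, descends to the \emph{pairwise} comparisons $a_{ik}+a_{ki}\le a_{kk}+a_{ii}$, and that these, rather than any explicit diagonal clause in~\eqref{LUcriteria}, are what guarantee the diagonal of $LU$ reproduces that of $A$. I expect this to be the main obstacle, since it is the point at which the hypothesis on $det_{\varepsilon}(A)$ is essential and where an incautious reading might wrongly conclude that the off-diagonal criterion alone is insufficient.
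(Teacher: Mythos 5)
Your proposal is correct and follows essentially the same route as the paper's proof: entrywise computation of $(LU)_{ij}$, trivial handling of the first row and column, the transposition-plus-cancellation argument from $det_{\varepsilon}(A)=a_{11}+\cdots+a_{nn}$ to settle the diagonal, and the identification of $det_{\varepsilon}(A[\{k,i\}\mid\{k,j\}])-a_{kk}$ with $\max(a_{ij},\,a_{ik}+a_{kj}-a_{kk})$. Your reformulation of $A=LU$ as the family of inequalities $a_{ik}+a_{kj}-a_{kk}\le a_{ij}$ for $k<\min\{i,j\}$ (exploiting that the term $k=\min\{i,j\}$ always equals $a_{ij}$) merely streamlines the same computation, with the side benefit of making the converse direction, which the paper dispatches in one sentence, fully explicit.
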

\begin{proof}
	Suppose that the conditions~\eqref{LUcriteria} hold for any $ 1 < i, j \leq n $ and $i\neq j$. Then it suffices to show that for all $i,j \in \underline{n}$, $a_{ij}=(LU)_{ij}$. To this end, we consider the following cases:
	\begin{enumerate}
		\item Let $i=1$. Then for all $j \in \underline{n}$,
		\begin{align*}
		(LU)_{1j} &\ = \bigoplus_{k=1}^{n}(l_{1k} \otimes u_{kj})\\
		&\ =\max_{k=1}^{n}(l_{1k}+u_{kj})\\
		&\ =l_{11}+u_{1j}\\
		&\ =a_{1j},
		\end{align*}
		given that for $k > 1$, $l_{1k}=-\infty$.
		\item Similarly, for $j=1$, we have $(LU)_{i1}=a_{i1}$ for all $i \in \underline{n}$ since $u_{k1}=-\infty$ for all $k> 1$.
		\item Now assume that $i=j$ for all $i,j \in \underline{n}$. Then
		\begin{align}
		(LU)_{ii} &\ \nonumber = \bigoplus_{k=1}^{n}(l_{ik}\otimes u_{ki})\\ \nonumber
		&\ = \max_{k=1}^{n}(l_{ik}+u_{ki})\\
		&\ = \max_{k=1}^{i}(a_{ik}-a_{kk}+a_{ki})\\
		&\ =a_{ii}.
		\end{align}
		The equality $ (4) $ holds since for all $k> i$, $u_{ki}=l_{ik}=-\infty$. Furthermore, with respect to any permutation $\sigma =(ik)$, we have
		\begin{center}
			$a_{1\sigma(1)}\otimes \cdots \otimes a_{n\sigma (n)} \leq det_{\varepsilon}(A) $ \qquad i.e.,
		\end{center}
		\begin{center}
			$\displaystyle{ a_{ik}+a_{ki}+ \sum _{
					\begin{subarray}{c}
					j=1,\\j\neq k, i
					\end{subarray}}^{n} a_{jj} \leq a_{ii}+  a_{kk}+ \sum _{\begin{subarray}{c}
					j=1, \\ j\neq k, i
					\end{subarray}}^{n} a_{jj} }.$
		\end{center}
		Since $S$ is a multiplicatively cancellative semifield, we get $ a_{ik}+a_{ki}= a_{ii} + a_{kk} $ which yields the equality $ (5 )$.
		\item  For other entries $(i,j > 1; i\neq j)$, we have
		\begin{align}
		(LU)_{ij} &\ =\nonumber \bigoplus_{k=1}^{n}(l_{ik} \otimes u_{kj})\\ \nonumber
		&\ = \max_{k=1}^{n}(l_{ik}+u_{kj})\\
		&\ = \max_{k=1}^{min\{i,j\}}(l_{ik}+u_{kj}) \\ \nonumber
		&\ = \max_{k=1}^{min\{i,j\}}(a_{ik}- a_{kk}+ a_{kj})\\
		&\ = \max (\max_{k=1}^{min\{i,j\}-1}(a_{ik}- a_{kk}+ a_{kj}), a_{ij})\\ \nonumber
		&\ =  \max_{k=1}^{min\{i,j\}-1}(\max(a_{ik}+ a_{kj}-a_{kk}), (a_{ij}+a_{kk}-a_{kk}))\\ \nonumber
		&\ = \max_{k=1}^{min\{i,j\}-1}(\max(a_{ik}+ a_{kj}, a_{ij}+a_{kk})-a_{kk})\\ \nonumber
		&\ = \max_{k=1}^{min\{i,j\}-1}(det_{\varepsilon}(A[\{k,i\} \mid \{k,j\}])-a_{kk})\\ \nonumber
		&\ = a_{ij},
		\end{align}
		The equality $(6)$ holds, because
		\begin{center}
			$(\forall k)(k > i, k > j \Rightarrow l_{ik}= -\infty , u_{kj}= -\infty)$. \end{center}
		As a result, for any $k > \min \{i,j\} $, $l_{ik}+u_{kj}=-\infty$. The equality $(7)$ holds, because for $k= \min\{i,j\}$ ,  $a_{ik}-a_{kk}+a_{kj}=a_{ij}$.\\
		
		Conversely, assume that $ A $ has the presented $ LU$-factorization, that is $a_{ij} =(LU)_{ij} $. Then by~\eqref{L} and~\eqref{U}, the proof is complete.
	\end{enumerate}
\end{proof}
\begin{ex} Consider $ A\in M_{4}(S)$ as follows.
	\[
	A=\left[
	\begin{array}{cccc}
	7&-1&3&0\\
	4&5&1&-2\\
	1&-6&2&-5\\
	-2&-9&-5&0\\
	\end{array}
	\right]	.
	\]
	$A$ has the following $LU$ factors:\\
	\[
	L=\left[
	\begin{array}{cccc}
	0&-\infty&-\infty&-\infty\\
	-3&0&-\infty&-\infty\\
	-6&-11&0&-\infty\\
	-9&-14&-7&0\\
	\end{array}
	\right]	,~ U=
	\left[
	\begin{array}{cccc}
	7&-1&3&0\\
	-\infty&5&1&-2\\
	-\infty&-\infty&2&-5\\
	-\infty&-\infty &-\infty&0\\
	\end{array}
	\right].
	\]
\end{ex}

Note that a Maple procedure for calculating the $LU$ factors of a square matrix from Theorem~\ref{LUable} is given in Table~4.
\section{Solving a linear system of equations}
In this section, we first discuss and analyze a system of the form $LX= b$ with a lower triangular matrix, $L$, followed by a system of the form $UX= b$ with an upper triangular matrix, $U$. The combination of these results gives the solutions of the system $AX= b$ if the $ LU$ factors of $ A $ exist.
\subsection{L-system}
Here, we study the solution of the lower triangular system $LX=b$ where $L \in M_{n}(S)$ and $b \in S^{n}$ is a regular vector. The $i-$th equation of this system is
\begin{center}
	$\max(l_{i1}+x_{1}, l_{i2}+x_{2}+...+l_{ii}+x_{i},-\infty)= b_{i}$.
\end{center}
\begin{theorem}\label{Lthm}
	Let $LX=b $ be a linear system of equations with a lower triangular matrix $ L \in M_{n} (S)$ and a regular vector $ b \in S^{n}$. Then the system $LX=b $ has solutions $ X\leq(b_{i}-l_{ii} )_{i=1} ^{n}$ if $ l_{ik}-l_{kk}\leq b_{i}-b_{k} $ for any $ 2\leq i\leq n$ and $1\leq k\leq i-1$.\\
	Moreover, if all the inequalities $ l_{ik}-l_{kk}\leq b_{i}-b_{k} $ are proper, then the maximal solution $ X= (b_{i}-l_{ii} )_{i=1} ^{n}$  of the system $ LX=b $ is unique.
\end{theorem}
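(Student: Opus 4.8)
The plan is to analyze the lower triangular system $LX=b$ directly by exploiting its triangular structure, which makes the $i$-th equation depend only on $x_1,\dots,x_i$. The $i$-th equation in max-plus reads $\max_{k=1}^{i}(l_{ik}+x_k)=b_i$, where $l_{ii}=0$ (since $L$ has $1$'s on the diagonal, i.e.\ $0$'s in max-plus). First I would observe that for any solution $X$, each term $l_{ik}+x_k$ must satisfy $l_{ik}+x_k \leq_S b_i$; applying this with $k=i$ gives $x_i = l_{ii}+x_i \leq_S b_i = b_i - l_{ii}$, which immediately establishes the upper bound $X \leq (b_i - l_{ii})_{i=1}^n$ claimed in the statement. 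This part requires no hypothesis on the $l_{ik}$ and holds for every solution.

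Next I would verify existence: I claim the candidate $X^* = (b_i - l_{ii})_{i=1}^n$ is itself a solution precisely when the stated inequalities hold. Substituting $x_k^* = b_k - l_{kk}$ into the $i$-th equation, the term indexed by $k=i$ contributes $l_{ii}+x_i^* = b_i$, so the maximum is at least $b_i$ and the equation becomes $\max(\max_{k=1}^{i-1}(l_{ik}+b_k-l_{kk}),\, b_i)=b_i$. This holds if and only if $l_{ik}+b_k-l_{kk}\leq_S b_i$ for every $k<i$, which rearranges exactly to the hypothesis $l_{ik}-l_{kk}\leq_S b_i-b_k$. Thus under the assumed inequalities, $X^*$ solves the system, and since every solution is bounded above by $X^*$, this proves the first assertion with $X^*$ maximal.

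For the uniqueness claim under strict inequalities, the idea is to show that the maximal solution $X^*$ is then forced to be the \emph{only} solution. Suppose $X$ is any solution and suppose for contradiction that $X \neq X^*$; since $X \leq_S X^*$ componentwise, there is a least index $i$ with $x_i <_S x_i^* = b_i - l_{ii}$, while $x_k = x_k^*$ for all $k<i$. I would examine the $i$-th equation $\max_{k=1}^{i}(l_{ik}+x_k)=b_i$. The diagonal term now gives $l_{ii}+x_i = x_i <_S b_i$ strictly, so the maximum $b_i$ must be attained by some off-diagonal term $l_{ik}+x_k$ with $k<i$; but $x_k=x_k^*=b_k-l_{kk}$ there, so $b_i = l_{ik}+b_k-l_{kk}$, contradicting the \emph{strict} inequality $l_{ik}-l_{kk}<_S b_i-b_k$. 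Hence no such $i$ exists and $X=X^*$.

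The main obstacle I anticipate is not the logical skeleton above but handling the algebra carefully over a general semifield rather than over $\mathbb{R}$: the subtractions $b_i-l_{ii}$ and $l_{ik}-l_{kk}$ are multiplicative inverses $b_i\otimes l_{ii}^{-1}$, and every manipulation of ``$\max$'' must be justified through the order axioms $a\leq_S a\oplus b$ and the compatibility of $\leq_S$ with $\oplus$ and $\otimes$ stated in Section~2. I would therefore phrase each inequality in the intrinsic semifield notation ($\oplus$, $\otimes$, $\leq_S$) and invoke multiplicative cancellativity to pass between $l_{ik}\otimes l_{kk}^{-1}\leq_S b_i\otimes b_k^{-1}$ and $l_{ik}\otimes b_k\leq_S b_i\otimes l_{kk}$, being careful that the strict-versus-nonstrict distinction survives these rearrangements—which it does, since multiplication by an invertible element is an order isomorphism.
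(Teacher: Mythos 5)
Your proof is correct, and it is organized differently from the paper's. The paper proves the result by forward-substitution induction on $i$: starting from $x_1=b_1-l_{11}$ (forced by the first equation), it invokes the hypothesis $l_{ik}-l_{kk}\leq b_i-b_k$ at each step to conclude that the $i$-th equation is consistent and gives $x_i\leq b_i-l_{ii}$, and it then dismisses the uniqueness claim under strict inequalities with the word ``clearly''. You instead split the argument into three non-inductive pieces: (i) the bound $x_i\leq b_i-l_{ii}$ holds for \emph{every} solution with no hypothesis at all, since each term of a max is dominated by the max, in particular $l_{ii}+x_i\leq b_i$; (ii) the candidate $X^*=(b_i-l_{ii})_{i=1}^n$ solves the system exactly when the stated inequalities hold, by direct substitution (the diagonal term attains $b_i$, and the hypothesis is precisely the condition that no off-diagonal term exceeds it); (iii) under strict inequalities, any solution distinct from $X^*$ yields a least index $i$ where it drops below $X^*$, whereupon the $i$-th equation forces some off-diagonal term $l_{ik}+x_k^*$ to equal $b_i$, contradicting strictness. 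This organization buys two things the paper's write-up lacks: a clean separation of what requires the hypothesis (only existence; maximality of the bound is automatic), and an actual proof of the uniqueness assertion, which the paper leaves unjustified. Two minor points to tidy: you assume $l_{ii}=0$, which holds for the $L$ factor of Section~3 but is not part of the theorem's hypotheses --- your argument works verbatim for arbitrary invertible diagonal entries, and invertibility of $l_{ii}$ is implicitly needed even to write $b_i-l_{ii}$; and in step (iii) you should note that the least index cannot be $i=1$, which your argument handles implicitly since there are then no off-diagonal terms available to attain the maximum.
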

\begin{proof}	
	The proof is through induction on $ i $. For $ i=2~(k=1)$, the second equation of the system $ LX=b $ in the form
	$$ max( l_{21}+ x_{1},l_{22} + x_{2} ) =b_{2} $$
	implies that $ x_{2}\leq b_{2} - l_{22} $, since $ l_{21}-l_{11}\leq b_{2}-b_{1} $ and $ x_{1}=b_{1}- l_{11} $. We also show that the statement is true for $ i=3\quad (k= 1,2) $. Since the inequalities $ l_{31}- l_{11}\leq b_{3} -b_{1} $ and $ l_{32}- l_{22}\leq b_{3} -b_{2} $ hold, replacing for $ x_{1}$, and $x_{2} $ in the third equation yields $x_{3} \leq b_{3} - l_{33}$.
	
	Suppose that the statements are true for all $ i \leq m-1 $ , i.e., $ x_{1}=b_{1}-l_{11} $ and $ x _{i} \leq b_{i}-l_{ii} $, for any $ 2 \leq i \leq m-1 $.
	
	Now, let $ i=m~ (k=1, \cdots ,m-1) $. Then $ l_{ik}- l_{kk} \leq b_{i} -b_{k} $, and by the induction hypothesis, $ x _{i} \leq b_{i}-l_{ii} $ for any $2\leq i\leq m-1 $. As such, in the $ m- $th equation of the system we get $ x_{m} \leq b_{m} -l_{mm} $. Hence, the system $ LX=b $ has the maximal solution  $ X=(b_{i}-l_{ii} )_{i=1}^{n} $. Clearly, if $ l_{ik}-l_{kk} < b_{i}-b_{k} $ for any $ 2\leq i\leq n$ and $1\leq k\leq i-1$, then the maximal solution $X= (b_{i}-l_{ii} )_{i=1} ^{n}$ is unique.
\end{proof}
\begin{remark}
	The inequalities in Theorem~\ref{Lthm} give a sufficient condition  for the existence of the solutions of $ LX=b $, but it is not a necessary condition.\\
	For example, let $ L \in M_{3}(S) $ be a lower triangular matrix and $ LX=b $ has solutions $ X\leq(b_{i}-l_{ii} )_{i=1}^{3} $. Then the equation $ \max( l_{31}+x_{1},l_{32}+x_{2},l_{33}+x_{3})=b_{3} $ implies $ l_{32}+x_{2}\leq b_{3} $. However, considering $x_{2}\leq b_{2}- l_{22} $, we can not necessarily conclude that $ l_{32}-l_{22}\leq b_{3}- b_{2} $.
\end{remark}
The next example shows that if $ l_{ik}-l_{kk} >b_{i}-b_{k} $ for some  $ 2\leq i\leq n $ and $ 1\leq k\leq i-1$, then the system $ LX=b $ has no solution.\\
\begin{ex}
	Consider the following system:
	\[
	\left[
	\begin{array}{ccc}
	3&-\infty&-\infty\\
	-5&4&-\infty\\
	8&18&-2
	\end{array}
	\right]
	\left[
	\begin{array}{c}
	x_{1}\\
	x_{2}\\
	x_{3}
	\end{array}
	\right]
	=
	\left[
	\begin{array}{c}
	6\\
	-2\\
	10
	\end{array}
	\right].
	\]
	It has no solution because $ l_{31} -l_{11} >b_{3}-b_{1} $ and $ x_{1}=b_{1}-l_{11} $. Consequently, $ l_{31} +x_{1} >b_{3}$.
\end{ex}
In the following theorem, we present necessary and sufficient conditions such that the lower triangular system $ LX=b $ has no solution.
\begin{theorem}\label{Lnothem}
	Let $LX=b $ be a linear system of equations with a lower triangular matrix $ L \in M_{n} (S)$ and a regular vector $ b \in S^{n}$. Then the system $LX=b $ has no solution if and only if $l_{ik}-l_{kk} > b_{i}- b_{k}$
	and $b_{i} - l_{ik} < x_{k} \leq b_{k} - l_{kk}$ for some $2\leq i\leq n$, $1\leq k\leq i-1$.
\end{theorem}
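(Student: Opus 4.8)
The plan is to analyze the system by forward substitution, in the same spirit as the proof of Theorem~\ref{Lthm}, and to pin down the single pair $(i,k)$ at which an overshoot destroys solvability. Throughout, I read $x_k$ in the statement as the value that the $k$-th unknown is forced to take by the equations $1,\dots,k$. Recall that the first equation $l_{11}+x_1=b_1$ forces $x_1=b_1-l_{11}$, and that the diagonal term of the $k$-th equation forces $x_k\le b_k-l_{kk}$ in every solution; this is the right-hand inequality among the trapping inequalities $b_i-l_{ik}<x_k\le b_k-l_{kk}$. Writing the $i$-th equation as $\max\bigl(\max_{k<i}(l_{ik}+x_k),\,l_{ii}+x_i\bigr)=b_i$, the guiding observation is that an off-diagonal term can only break solvability by \emph{exceeding} $b_i$: since $x_k\le b_k-l_{kk}$, the term $l_{ik}+x_k$ overshoots $b_i$ exactly when $x_k>b_i-l_{ik}$, the left-hand trapping inequality. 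Note also that the gap inequality $l_{ik}-l_{kk}>b_i-b_k$ rearranges to $b_i-l_{ik}<b_k-l_{kk}$, so its sole purpose is to guarantee that the forbidden interval $(b_i-l_{ik},\,b_k-l_{kk}]$ is nonempty.

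For the ``if'' direction I would argue directly. Suppose both inequalities hold for some $2\le i\le n$ and $1\le k\le i-1$. The left trapping inequality gives $l_{ik}+x_k>b_i$, so the $k$-th summand in the $i$-th equation already exceeds $b_i$. Since the left-hand side of that equation is a maximum that dominates its $k$-th summand, it is strictly greater than $b_i$ for every choice of the remaining unknowns; hence the $i$-th equation cannot hold and the system has no solution.

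For the ``only if'' direction I would induct on the row index, reusing the mechanism established inductively in Theorem~\ref{Lthm}. Run forward substitution, assigning at each stage the maximal admissible value $x_k=b_k-l_{kk}$ whenever the $k$-th equation has no off-diagonal slack, i.e.\ whenever $\max_{j<k}(l_{kj}+x_j)<b_k$. If the system is insolvable, this process must fail at some first index $i$; this forces $\max_{k<i}(l_{ik}+x_k)>b_i$, since otherwise the choice $x_i=b_i-l_{ii}$ would satisfy the $i$-th equation. Choosing a $k<i$ that attains this maximum yields $l_{ik}+x_k>b_i$ and, together with $x_k\le b_k-l_{kk}$, the trapping inequalities. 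At this bottleneck the variable $x_k$ sits at its forced value $b_k-l_{kk}$, so $l_{ik}+x_k>b_i$ becomes $l_{ik}-l_{kk}>b_i-b_k$, the gap inequality. The induction hypothesis guarantees that all lower-indexed equations are consistent and that no earlier overshoot occurred, so $(i,k)$ is genuinely the first obstruction.

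The step I expect to be the main obstacle is this ``only if'' direction, and specifically the reconciliation with the Remark following Theorem~\ref{Lthm}: the gap inequality \emph{alone} does not force insolvability, because when the $k$-th equation has off-diagonal slack one may take $x_k$ strictly below $b_k-l_{kk}$ and escape the forbidden interval. The argument must therefore distinguish the indices at which $x_k$ is genuinely pinned to $b_k-l_{kk}$ from those at which it remains free, and prove that insolvability arises exactly from a pinned variable landing in $(b_i-l_{ik},\,b_k-l_{kk}]$. Carrying this ``pinned versus free'' dichotomy through the induction, so that the trapping inequalities capture precisely the extra content beyond the gap inequality, is the crux of the proof.
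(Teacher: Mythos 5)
Your proposal follows essentially the same route as the paper's proof. The ``if'' direction is the paper's verbatim: the left trapping inequality gives $l_{ik}+x_{k}>b_{i}$, so the $i$-th equation overshoots and the system cannot be solved. Your ``only if'' direction is also the paper's argument, merely reorganized: the paper first invokes the contrapositive of Theorem~\ref{Lthm} to produce a pair $(i,k)$ with $l_{ik}-l_{kk}>b_{i}-b_{k}$, takes $i$ lowest, and then derives $b_{i}-l_{ik}<x_{k}\leq b_{k}-l_{kk}$ from precisely your observation that if every $x_{k}$ satisfied $x_{k}\leq b_{i}-l_{ik}$, then choosing $x_{i}=b_{i}-l_{ii}$ would solve equation $i$ and hence the system. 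Two comments on the details. First, your intermediate claim that at the bottleneck ``$x_{k}$ sits at its forced value $b_{k}-l_{kk}$'' is unjustified under your own pinned/free dichotomy (the maximizing index $k$ could be a free one), but it is also unnecessary: as you note in your opening paragraph, the gap inequality is a formal consequence of the trapping inequalities, so you may simply delete that claim. Second, and more importantly, the ``crux'' you flag---carrying the pinned-versus-free distinction through the induction so that $x_{k}$ has a well-defined meaning when no solution exists---is not handled by the paper at all; its proof manipulates the unquantified symbol $x_{k}$ exactly as loosely as the statement does. Your concern is substantive rather than cosmetic: for
$$L=\left[\begin{array}{ccc} 0&-\infty&-\infty\\ 0&0&-\infty\\ -\infty&10&0 \end{array}\right],\qquad b=\left[\begin{array}{c}0\\0\\0\end{array}\right],$$
the maximal forward-substitution values give $x_{2}=b_{2}-l_{22}=0$, which lies in the trap $(b_{3}-l_{32},\,b_{2}-l_{22}]=(-10,0]$ and satisfies the gap inequality $l_{32}-l_{22}=10>b_{3}-b_{2}=0$, yet $X=(0,-10,0)^{T}$ solves the system; so the ``if'' direction of Theorem~\ref{Lnothem} is false unless $x_{k}$ is read as a genuinely pinned value (here $x_{2}$ is free, because $l_{21}+x_{1}$ already attains $b_{2}$). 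In short, your sketch reproduces the paper's proof, and the step you could not close is a defect in the theorem's formulation that the paper's own proof does not close either; making the pinned reading explicit, as you propose, is what a fully rigorous version would require.
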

\begin{proof}
	Suppose that $l_{ik}-l_{kk} > b_{i}- b_{k}$ and $b_{i} - l_{ik} < x_{k} \leq b_{k} - l_{kk}$ for some $2\leq i\leq n$, $1\leq k\leq i-1$. Then in the $i-$th equation of the system $ LX=b $, we have  $ l_{ik}+x_{k} > b_{i}$, which yields the system has no solution.
	
	Conversely, assume that $LX=b$ has no solution. Then due to Theorem~\ref{Lthm}, there are some $2\leq i\leq n$, $1\leq k\leq i-1$ such that  $l_{ik} - l_{kk} > b_{i}- b_{k}$ . Let $i $ be the lowest index satisfying the last inequality. Then we have $x_{k} \leq b_{k} - l_{kk}$, for any $1\leq k\leq i-1$. Suppose that $x_{k} \leq b_{i} - l_{ik}$ for any $2\leq i\leq n$, $1\leq k\leq i-1$. This means the system $LX=b$ must have a solution, which is a contradiction. Hence, $b_{i} - l_{ik} < x_{k} \leq b_{k} - l_{kk}$ for some $2\leq i\leq n$, $1\leq k\leq i-1$.
\end{proof}
\begin{corollary}
	The system $LX=b$ has no solution if and only if $l_{i1} - l_{11} > b_{i} - b_{1}$, for some $ 2 \leq i  \leq n $.
\end{corollary}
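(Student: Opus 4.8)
The plan is to obtain the corollary as the specialization of Theorem~\ref{Lnothem} to the first column, using the crucial rigidity that the first equation of $LX=b$ pins down $x_1$ exactly. Since $L$ is lower triangular, the first equation reads $l_{11}+x_1=b_1$, so $x_1=b_1-l_{11}$ is forced with no freedom. For the sufficiency (``if'') direction I would assume $l_{i1}-l_{11}>b_i-b_1$ for some $2\le i\le n$ and substitute the forced value of $x_1$ into the $i$-th equation: then $l_{i1}+x_1=l_{i1}+b_1-l_{11}>b_i$, so the left-hand side already exceeds $b_i$ and the $\max$ can only be larger, whence no choice of the remaining $x_j$ can repair the equation and the system has no solution. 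Equivalently, this is Theorem~\ref{Lnothem} with $k=1$: the interval condition $b_i-l_{i1}<x_1\le b_1-l_{11}$ holds automatically once $x_1=b_1-l_{11}$, and it reduces precisely to $l_{i1}-l_{11}>b_i-b_1$.

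For the converse I would argue by contraposition, assuming $l_{i1}-l_{11}\le b_i-b_1$ for every $2\le i\le n$ and trying to produce a solution. The natural attempt is to show that the only obstruction detected by Theorem~\ref{Lnothem} lives in the first column, i.e. that an obstruction at a pair $(i,k)$ with $k\ge 2$ can always be traced back to a first-column one. The mechanism I would exploit is again the rigidity of the earlier equations: $x_1$ is fixed, and once $l_{i1}+x_1\le b_i$ holds for all $i$ the first-column term never attains the $\max$, so the variables $x_2,\dots,x_n$ retain downward freedom. I would then run the inductive construction of Theorem~\ref{Lthm}, taking each $x_k$ as large as permitted, and verify that the resulting vector satisfies every equation.

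The step I expect to be the main obstacle is exactly this converse. The subtlety is that a variable $x_k$ with $k\ge 2$ can be \emph{driven up to its maximal value} $b_k-l_{kk}$ by its own equation, precisely when every earlier term in equation $k$ falls strictly below $b_k$; such a forced-large $x_k$ may then violate a later equation $i>k$ even though no first-column inequality is breached. Controlling this cascade is the crux: one must show either that it cannot arise under the hypothesis $l_{i1}-l_{11}\le b_i-b_1$, or that whenever it does arise it manifests as a genuine first-column obstruction. I would therefore scrutinize whether the reduction of all obstructions to the first column really holds for an unconstrained lower triangular $L$, or whether it needs the extra inter-entry structure carried by the $LU$ factors of Theorem~\ref{LUable} (for instance $l_{ii}=0$ together with the determinantal identities), since this is where the argument is delicate and most likely to require an additional hypothesis.
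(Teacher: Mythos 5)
Your sufficiency argument is exactly the paper's: the first equation forces $x_{1}=b_{1}-l_{11}$, and a violation $l_{i1}-l_{11}>b_{i}-b_{1}$ then places $x_{1}$ in the forbidden window of Theorem~\ref{Lnothem} with $k=1$, so the $i$-th equation overshoots $b_{i}$ and no solution exists. Where the two texts part ways is the converse: the paper disposes of it with the single sentence ``the converse is trivial'', while you stopped, identified the cascade of forced values as the crux, and questioned whether every obstruction really reduces to a first-column one for an arbitrary lower triangular $L$. Your doubt is justified, and the deficiency lies in the corollary, not in your strategy: the ``only if'' direction is false as stated. Take, in $\mathbb{R}_{\max,+}$,
\[
L=\left[
\begin{array}{ccc}
0&-\infty&-\infty\\
-100&0&-\infty\\
-100&10&0
\end{array}
\right],
\qquad
b=\left[
\begin{array}{c}
0\\
0\\
0
\end{array}
\right].
\]
The first equation forces $x_{1}=0$; in the second equation the term $l_{21}+x_{1}=-100$ cannot attain the maximum, so $x_{2}=0$ is forced as well; the third equation then reads $\max(-100,\,10,\,x_{3})=0$, which is impossible because $l_{32}+x_{2}=10>b_{3}$. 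The system has no solution by precisely the cascade mechanism you describe (an obstruction at the pair $(i,k)=(3,2)$, detected by Theorem~\ref{Lnothem}), and yet $l_{i1}-l_{11}=-100\leq b_{i}-b_{1}$ for $i=2,3$, so the first-column condition of the corollary never fires.

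Consequently your proposal cannot be completed into a proof of the stated equivalence, but no proof exists: the only salvageable content is the implication you did prove (first-column violation implies no solution), which is also all the paper actually establishes before waving at the converse. Restoring an equivalence requires an additional hypothesis of exactly the kind you flagged, for instance that no obstruction occurs in columns $k\geq 2$ (i.e.\ $l_{ik}-l_{kk}\leq b_{i}-b_{k}$ for all $k\geq 2$), or structural constraints imported from the $LU$ factors of Theorem~\ref{LUable}; under the corollary's stated hypotheses alone the converse fails. Your instinct to scrutinize the ``trivial'' step rather than assert it was the correct mathematical judgment here.
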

\begin{proof}
	Suppose that $l_{i1} - l_{11} > b_{i} - b_{1}$, for some $ 2 \leq i  \leq n $. Then due to Theorem~\ref{Lnothem}, the system $ LX=b $ has no solution since $x_{1}= b_{1} - l_{11}$ and consequently, $ b_{i}- l_{i1} < x_{1} \leq b_{1} - l_{11} $, for some $  2 \leq i  \leq n $ and $ k=1 $. The converse is trivial.
\end{proof}
See Table~5 for a Maple procedure associated with the results of Theorems~\ref{Lthm} and~\ref{Lnothem}.
\subsection{U-system}
In this section, we study the solution of the upper triangular system $ UX=b $ with $ U \in M_{n}(S) $ and $ b \in S^{n} $ that is a regular vector. Note that we can rotate an upper triangular matrix and turn it into a lower triangular matrix through a clockwise $ 180-$degree rotation. As such, the $U-$system $ UX=b $
becomes an $L-$system $ LX^{\prime}=b^{\prime}  $ with the following matrix:
\[
\left[
\begin{array}{cccc}
u_{nn}&-\infty &\cdots&-\infty\\
u_{(n-1)n}&u_{(n-1)(n-1)}&\cdots&-\infty\\
\vdots&\vdots&\ddots&\vdots\\
u_{1n}&u_{1(n-1)}&\cdots&u_{11}
\end{array}
\right]
\] \\
where $ l_{ij}= u_{(n-i+1)(n-j+1)} $ , $ x_{i}^{\prime} = x_{n-i+1} $ and $ b_{i}^{\prime} =b_{n-i+1} $, for every $1 \leq i,j \leq n$ and $j \leq i$.
\begin{theorem}\label{Uthm}
	Let $ UX=b $ be a linear system of equations with an upper triangular matrix $U \in M_{n}(S)$ and a regular vector $b \in S^{n}$. Then the system $UX=b $ has solutions $X \leq (b_{j}- u_{jj})_{j=1}^{n}$ if $u_{(n-i)k}-u_{kk} \leq b_{(n-i)}- b_{k}$ for any $1\leq i\leq n-1$ and $n-i+1\leq k\leq n$. Moreover, if the inequalities $u_{(n-i)k}-u_{kk} \leq b_{(n-i)}- b_{k}$ are proper, then the maximal solution $X=(b_{j}- u_{jj})_{j=1}^{n}$ of the $U$-System is unique.
\end{theorem}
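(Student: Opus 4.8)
The plan is to reduce the statement to the lower-triangular case already settled in Theorem~\ref{Lthm}, using the $180$-degree rotation described immediately before the theorem. That rotation produces a lower triangular matrix $L$ with $l_{pq} = u_{(n-p+1)(n-q+1)}$ and relabels the data by $x_p' = x_{n-p+1}$ and $b_p' = b_{n-p+1}$. So I would first show that solving $UX = b$ is equivalent to solving $LX' = b'$, and then transport the conclusion of Theorem~\ref{Lthm} back through the relabeling.

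First I would make the equivalence precise as a permutation relabeling. Writing $\sigma$ for the reversal $\sigma(p) = n - p + 1$, the rotated matrix is exactly $L = P_\sigma U P_\sigma$, and since $P_\sigma$ is an involution we have $P_\sigma U X = L(P_\sigma X)$. Hence, setting $X' = P_\sigma X$ and $b' = P_\sigma b$, the identity $UX = b$ holds if and only if $LX' = b'$ holds: by the remark following Theorem~\ref{invert}, premultiplication by the invertible matrix $P_\sigma$ does not alter the solution set, and the substitution $X = P_\sigma X'$ accounts for the column relabeling. Because a permutation of coordinates preserves the componentwise order on $S^n$, the correspondence $X \leftrightarrow X'$ is an order isomorphism between the two solution sets; in particular it carries the maximal solution and the bound $X' \leq (b'_p - l_{pp})_{p=1}^n$ over to the maximal solution and the claimed bound $X \leq (b_j - u_{jj})_{j=1}^n$.

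The substantive step, and the one I expect to cost the most care, is verifying that the hypothesis of Theorem~\ref{Uthm} is \emph{verbatim} the hypothesis of Theorem~\ref{Lthm} applied to $(L, b')$. Under the substitution $p = i + 1$ and $q = n - k + 1$ I would check the entrywise identities $l_{pq} = u_{(n-i)k}$ and $l_{qq} = u_{kk}$ together with $b'_p = b_{n-i}$ and $b'_q = b_k$, so that the L-inequality
\[
l_{pq} - l_{qq} \leq b'_p - b'_q
\]
turns into
\[
u_{(n-i)k} - u_{kk} \leq b_{n-i} - b_k .
\]
The remaining point is that the L-ranges $2 \leq p \leq n$, $1 \leq q \leq p - 1$ correspond exactly to the U-ranges $1 \leq i \leq n-1$, $n - i + 1 \leq k \leq n$; this is a direct but fiddly endpoint check, and is really the only place where an error could creep in.

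Finally, with the hypotheses identified, Theorem~\ref{Lthm} yields solutions $X' \leq (b'_p - l_{pp})_{p=1}^n$, and rewriting via $j = n - p + 1$ gives $X \leq (b_j - u_{jj})_{j=1}^n$ with maximal value $x_j = b_j - u_{jj}$. For the uniqueness clause I would observe that strict inequalities are preserved unchanged under the relabeling, so that \emph{proper} U-hypotheses correspond to \emph{proper} L-hypotheses; the uniqueness half of Theorem~\ref{Lthm} then delivers the unique maximal solution. No genuinely new difficulty arises beyond the index translation, since all the analytic content was already carried out for the $L$-system.
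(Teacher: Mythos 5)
Your proposal is correct and follows essentially the same route as the paper's own proof: convert $UX=b$ into the lower triangular system $LX'=b'$ via the $180$-degree rotation, verify that the U-hypotheses with the substitution $p=i+1$, $q=n-k+1$ become exactly the L-hypotheses of Theorem~\ref{Lthm}, and transport the bound and uniqueness back through the relabeling. Your formalization of the rotation as conjugation by the reversal permutation, $L=P_\sigma U P_\sigma$ with $P_\sigma$ an involution, is a slightly more explicit justification of the equivalence of the two systems than the paper gives, but it is the same argument.
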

\begin{proof}
	We first convert the upper triangular system $UX=b$ into a lower triangular system $LX^{\prime}=b^{\prime}$ as explained above. We can now rewrite $u_{(n-i)k}-u_{kk} \leq b_{(n-i)}- b_{k}$ as
	$$l_{(i+1)(n+1-k)}- l_{(n+1-k)(n+1-k)} \leq b_{i+1}- b_{n+1-k}.$$
	Consequently, without loss of generality, we have $l_{st}-l_{tt} \leq b_{s}- b_{t}$ for every $2\leq s\leq n$, $1 \leq t\leq i-1$. Hence by Theorem~\ref{Lthm}, the system $LX^{\prime}=b^{\prime}$ has solutions $X^{\prime}=(x_{i}^{\prime})_{i=1}^{n}$ where $x_{i}^{\prime} \leq b_{i}^{\prime} - l_{ii}$ which implies $x_{n-i+1} \leq b_{n-i+1}-u_{(n-i+1)(n-i+1)}$. Thus, $UX=b$ has the maximal solution $X=(b_{j}- u_{jj})_{j=1}^{n}$. Note further that the maximal solution is unique if $u_{(n-i)k}-u_{kk} < b_{(n-i)}- b_{k}$ for any $1\leq i\leq n-1$ and $n-i+1\leq k\leq n$.
\end{proof}
The next theorem provides necessary and sufficient conditions such that the $U$-system $ UX=b $ has no solution.
\begin{theorem}\label{Unothm}
	Let $ UX=b $ be a linear system of equations with an upper triangular matrix $U \in M_{n}(S)$ and a regular vector $b \in S^{n}$. Then the system $UX=b$ has no solution if and only if $u_{(n-i)k}- u_{kk} > b_{n-i} -b_{k}$ and $b_{n-i} - u_{(n-i)k} \leq x_{k} \leq b_{k}-u_{kk}$ for some $1\leq i\leq n-1$, $n-i+1\leq k\leq n$.
\end{theorem}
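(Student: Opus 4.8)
The plan is to reduce the $U$-system to an $L$-system by the same clockwise rotation used in the proof of Theorem~\ref{Uthm}, and then to invoke the insolubility criterion for lower triangular systems, namely Theorem~\ref{Lnothem}. First I would set up the rotation explicitly: the upper triangular system $UX=b$ is converted into the lower triangular system $LX^{\prime}=b^{\prime}$ with $l_{ij}=u_{(n-i+1)(n-j+1)}$, $x_{i}^{\prime}=x_{n-i+1}$, and $b_{i}^{\prime}=b_{n-i+1}$ for all admissible indices. Because this transformation merely relabels the equations and the unknowns, the map $X \mapsto X^{\prime}$ given by $x_{i}^{\prime}=x_{n-i+1}$ is a bijection between the solution sets of the two systems; in particular, $UX=b$ has no solution if and only if $LX^{\prime}=b^{\prime}$ has no solution.

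Next I would apply Theorem~\ref{Lnothem} to $LX^{\prime}=b^{\prime}$: this system has no solution if and only if there exist indices $2 \leq i^{\prime} \leq n$ and $1 \leq k^{\prime} \leq i^{\prime}-1$ with
$$l_{i^{\prime}k^{\prime}}-l_{k^{\prime}k^{\prime}} > b_{i^{\prime}}^{\prime}-b_{k^{\prime}}^{\prime} \quad\text{and}\quad b_{i^{\prime}}^{\prime}-l_{i^{\prime}k^{\prime}} < x_{k^{\prime}}^{\prime} \leq b_{k^{\prime}}^{\prime}-l_{k^{\prime}k^{\prime}}.$$
I would then substitute back the definitions of $l$, $b^{\prime}$, and $x^{\prime}$ and reparametrize the index ranges. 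Writing $k=n-k^{\prime}+1$ and replacing the row label $n-i^{\prime}+1$ by $n-i$ (equivalently $i=i^{\prime}-1$), the constraint $2 \leq i^{\prime} \leq n$ becomes $1 \leq i \leq n-1$ and $1 \leq k^{\prime} \leq i^{\prime}-1$ becomes $n-i+1 \leq k \leq n$. Under these substitutions the first inequality turns into $u_{(n-i)k}-u_{kk} > b_{n-i}-b_{k}$ and the double inequality turns into $b_{n-i}-u_{(n-i)k} < x_{k} \leq b_{k}-u_{kk}$, which is precisely the stated condition. Combining the two equivalences then gives that $UX=b$ has no solution if and only if these conditions hold for some $1 \leq i \leq n-1$ and $n-i+1 \leq k \leq n$, completing the argument.

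The main obstacle I anticipate is purely bookkeeping: ensuring the index substitutions $k=n-k^{\prime}+1$ and $i=i^{\prime}-1$ map the ranges $\{2,\dots,n\}$ and $\{1,\dots,i^{\prime}-1\}$ correctly onto $\{1,\dots,n-1\}$ and $\{n-i+1,\dots,n\}$ without off-by-one errors, and that the inequalities survive the relabelling with the correct orientation. I would also double-check the strictness of the left-hand bound $b_{n-i}-u_{(n-i)k} < x_{k}$, which the rotation inherits from the strict inequality in Theorem~\ref{Lnothem}; this suggests the $\leq$ appearing in the statement should read $<$. Should the rotation bookkeeping prove error-prone, an alternative is to argue directly in the upper triangular setting, mirroring the proof of Theorem~\ref{Lnothem}: for the forward direction one observes that $u_{(n-i)k}+x_{k} > b_{n-i}$ forces the maximum in the $(n-i)$-th equation to exceed $b_{n-i}$, and for the converse one takes the least suitable row index and derives the required bound on $x_{k}$ by contradiction with solubility.
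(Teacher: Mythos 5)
Your proposal is correct and takes essentially the same route as the paper: the paper's proof likewise converts $UX=b$ into the lower triangular system $LX^{\prime}=b^{\prime}$ via the clockwise rotation and then simply cites Theorem~\ref{Lnothem}, leaving all the index bookkeeping (which you carry out correctly) to the reader. Your side observation is also well taken: the rotation of Theorem~\ref{Lnothem} yields the strict bound $b_{n-i}-u_{(n-i)k} < x_{k}$, so the ``$\leq$'' on the left of the double inequality in the statement appears to be a typo.
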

\begin{proof}
	By converting the system $UX=b$ into the system $LX^{\prime}=b^{\prime}$ and applying Theorem~\ref{Lnothem}, the proof is complete.
\end{proof}
A Maple procedure based on the results of Theorems~\ref{Uthm} and~\ref{Unothm} is given in Table~6.
\subsection{LU-system}
The $LU$-factorization method is well-known for solving systems of linear equations. In this section, we find the solutions of the system $ AX=b $ for any $ A \in M_{n}(S) $ and a regular vector $ b \in S^{n} $, where $ A $ has $LU$ factors.
\begin{theorem}\label{LUthm}
	Let $ A \in M_{n}(S) $ and suppose $ A $ has $LU$ factors given by~\eqref{L} and~\eqref{U}, respectively. The system $ AX=b $ has the maximal solution $ X= (b_{i}-a_{ii})_{i=1}^{n}$ if $ a_{ik}-a_{kk}\leq b_{i}-b_{k} $ and $ a_{(n-j)l} -a_{ll}\leq b_{(n-j)} -b_{l} $ for any  $2\leq i\leq n$, $1\leq k\leq i-1$, $1\leq j\leq n-1$, and $n-j+1\leq l\leq n$. Furthermore, the system $ AX=b $ has the unique solution  $ X= (b_{i}-a_{ii})_{i=1}^{n}$ if all the iequalities $ a_{ik}-a_{kk} \leq b_{i}-b_{k} $ and $ a_{(n-j)l} -a_{ll} \leq b_{(n-j)} -b_{l} $ are proper.
\end{theorem}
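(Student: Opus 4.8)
The plan is to exploit the factorization $A = LU$ exactly as anticipated in the introduction: rewrite $AX = b$ as $L(UX) = b$, introduce the intermediate vector $Z = UX$, solve the lower triangular system $LZ = b$ first, and then feed the result into the upper triangular system $UX = Z$. The two triangular solves are already governed by Theorems~\ref{Lthm} and~\ref{Uthm}, so the whole argument reduces to translating the hypotheses of the present statement into the hypotheses of those two results while carefully tracking the diagonal entries of $L$ and $U$.

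The observation that makes everything collapse is that by~\eqref{L} the diagonal of $L$ is trivial, namely $l_{ii} = a_{ii} - a_{ii} = 0$ (the multiplicative identity), whereas $l_{ik} = a_{ik} - a_{kk}$ off the diagonal. Hence the hypothesis $a_{ik} - a_{kk} \le b_i - b_k$ for $2 \le i \le n$, $1 \le k \le i-1$ is literally the condition $l_{ik} - l_{kk} \le b_i - b_k$ of Theorem~\ref{Lthm}, and the maximal solution it produces is $Z = (b_i - l_{ii})_{i=1}^{n} = (b_i)_{i=1}^{n} = b$, which is regular since $b$ is. Next, by~\eqref{U} we have $u_{ll} = a_{ll}$ and $u_{(n-j)l} = a_{(n-j)l}$, the latter because the index pair $(n-j,\,l)$ lies in the upper triangle as $l \ge n-j+1 > n-j$. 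Therefore, after relabeling, the second family $a_{(n-j)l} - a_{ll} \le b_{(n-j)} - b_l$ is exactly the hypothesis of Theorem~\ref{Uthm} for the system $UX = Z = b$, whose maximal solution is $X = (b_j - u_{jj})_{j=1}^{n} = (b_j - a_{jj})_{j=1}^{n}$. Since this $X$ satisfies $UX = b = Z$ and $Z$ satisfies $LZ = b$, I obtain $AX = L(UX) = LZ = b$, so $X = (b_i - a_{ii})_{i=1}^{n}$ is indeed a solution.

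For maximality I would argue directly rather than through the composition of the two solves. Writing the $i$-th equation as $\max_j (a_{ij} + x_j) = b_i$, every summand is at most the maximum, so in particular the diagonal summand satisfies $a_{ii} + x_i \le b_i$; since the $a_{ii}$ are invertible (finite), this gives $x_i \le b_i - a_{ii}$ for each $i$ and for every solution $X$. Thus any solution is dominated by $X = (b_i - a_{ii})_{i=1}^{n}$, which is therefore the maximal solution. For the uniqueness claim, when all the inequalities are proper the uniqueness parts of Theorems~\ref{Lthm} and~\ref{Uthm} apply: any solution $X$ of $AX = b$ yields $Z = UX$ solving $LZ = b$, which is then forced to equal $b$, after which $UX = b$ forces $X = (b_j - a_{jj})_{j=1}^{n}$.

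I expect the only genuinely delicate point to be the index bookkeeping when matching the second hypothesis to Theorem~\ref{Uthm} (the $n-j$ shifts and the range $n-j+1 \le l \le n$), together with confirming that the intermediate vector $Z = b$ is regular so that Theorem~\ref{Uthm} is applicable. Both become routine once the diagonal identities $l_{ii} = 0$ and $u_{ii} = a_{ii}$ are recorded, and the conceptual content — that each stage is an instance of an already-proved triangular-solve theorem, with maximality handled cheaply by the diagonal term — leaves no substantial obstacle.
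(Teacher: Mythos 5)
Your proposal is correct and follows the same route as the paper's own proof: rewrite $AX=b$ as $L(UX)=b$, solve $LZ=b$ via Theorem~\ref{Lthm}, then solve $UX=Z$ via Theorem~\ref{Uthm}, using $l_{ii}=0$ and $u_{ij}=a_{ij}$ (for $i\le j$) to translate the hypotheses on $A$ into the hypotheses of those two theorems.

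Where you go beyond the paper is in tightening two points that its proof leaves loose. First, you explicitly fix $Z=b$ (the maximal solution of the $L$-system, available because $l_{ii}=0$) before invoking Theorem~\ref{Uthm}; this matters, since the hypotheses of Theorem~\ref{Uthm} for the system $UX=Z$ are inequalities involving the right-hand side $Z$, while the theorem's assumptions here are stated in terms of $b$ --- the paper applies Theorem~\ref{Uthm} to $UX=Z$ with bounds $X\le (z_i-a_{ii})_{i=1}^{n}$ and then silently replaces $z_i$ by $b_i$, whereas your choice $Z=b$ makes that substitution legitimate and also guarantees the regularity of the right-hand side that Theorem~\ref{Uthm} requires. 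Second, you prove maximality directly: any solution of $AX=b$ satisfies $a_{ii}+x_i\le \max_j(a_{ij}+x_j)=b_i$, hence $x_i\le b_i-a_{ii}$, so the candidate vector dominates every solution without any appeal to the factorization. This is cleaner than the paper's argument, which obtains maximality only by chaining the bounds from the two triangular solves. Both refinements are improvements rather than a different method; the decomposition and the key lemmas are identical to the paper's.
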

\begin{proof}
	Let the matrix $A$ have $LU$ factors. Then the system $ AX=b $ may be rewritten as $ L(UX)=b $. To obtain $ X $, we must first decompose $ A $ and then solve the system $ LZ=b $ for $ Z $, where $ UX= Z$. Once $ Z $ is found, we solve the system $ UX=Z $ for $ X $.\\
	By the definition of  $L$, we have $l_{ik}= a_{ik}-a_{kk}$, for any $ i,k \in \underline{n} $ and $ i> k$. As such, the inequalities $ a_{ik}-a_{kk}\leq b_{i}-b_{k}$ for any $2\leq i\leq n$ and $1\leq k\leq i-1$ can be rewritten as the inequalities $l_{ik}- l_{kk} \leq b_{i}- b_{k}$. Therefore, due to Theorem~\ref{Lthm}, the system $LZ=b$ has solutions $Z \leq (b_{i} - l_{ii})_{i=1}^{n}$ where $l_{ii}=0$. Similarly, we can turn the inequalities $ a_{(n-j)l} -a_{ll} \leq b_{(n-j)} -b_{l} $, for any $1\leq j\leq n-1$, and $n-j+1\leq l\leq n$ into the inequalities $ u_{(n-j)l} -u_{ll} \leq b_{(n-j)} -b_{l} $, since $u_{ij}= a_{ij}$, for any $ i,j \in \underline{n}$ and $ i<j $. Then due to Theorem~\ref{Uthm}, the system $UX=Z$ has solutions $X \leq ( z_{i}- a_{ii})_{i=1}^{n}$. Consequently, the system $AX=b $ has the maximal solution $X = (b_{i}- a_{ii})_{i=1}^{n}$. Clearly, the obtained maximal solution is unique whenever all the assumed inequalities are proper.
\end{proof}
\begin{ex}	
	Let $ A\in M_{4}(S) $. Consider the following system $ AX=b $:	
	\[
	\left[
	\begin{array}{cccc}
	4&1&4&3\\
	-1&0&1&4\\
	3&7&8&1\\
	5&2&5&-2
	\end{array}
	\right]
	\left[
	\begin{array}{c}
	x_{1}\\
	x_{2}\\
	x_{3}\\
	x_{4}
	\end{array}
	\right]
	=
	\left[
	\begin{array}{c}
	3\\
	4\\
	9\\
	4
	\end{array}
	\right].
	\]
	Here, $det_{\varepsilon}(A)=a_{13}+a_{24}+a_{32}+a_{41}$, but there exists a permutation matrix $ P_{\sigma} $ corresponding to the permutation $ \sigma= (1324) $:\\
	\[ P_{\sigma}=\left[
	\begin{array}{cccc}
	-\infty&-\infty&-\infty&0\\
	-\infty&-\infty&0&-\infty\\
	0&-\infty&-\infty&-\infty\\
	-\infty&0&-\infty&-\infty\\
	\end{array}
	\right],
	\]
	such that $ P_{\sigma}A $ has the following $ LU $ factors:
	\[
	L=\left[
	\begin{array}{cccc}
	0&-\infty&-\infty&-\infty\\
	-2&0&-\infty&-\infty\\
	-1&-6&0&-\infty\\
	-6&-7&-3&0\\
	\end{array}
	\right]	,~ U=
	\left[
	\begin{array}{cccc}
	5&2&5&-2\\
	-\infty&7&8&1\\
	-\infty&-\infty&4&3\\
	-\infty&-\infty &-\infty&4\\
	\end{array}
	\right];
	\]
	We can now use the $ LU $ method to solve the system $ (P_{\sigma}A)X = P_{\sigma}b$:
	\[
	\left[
	\begin{array}{cccc}
	5&2&5&-2\\
	3&7&8&1\\
	4&1&4&3\\
	-1&0&1&4
	\end{array}
	\right]
	\left[
	\begin{array}{c}
	x_{1}\\
	x_{2}\\
	x_{3}\\
	x_{4}
	\end{array}
	\right]
	=
	\left[
	\begin{array}{c}
	4\\
	9\\
	3\\
	4
	\end{array}
	\right]
	\].
	Due to Theorem~\ref{LUthm}, we must first solve the system $ LZ=P_{\sigma}b $:
	\[
	\left[
	\begin{array}{cccc}
	0&-\infty&-\infty&-\infty\\
	-2&0&-\infty&-\infty\\
	-1&-6&0&-\infty\\
	-6&-7&-3&0\\
	\end{array}
	\right]
	\left[
	\begin{array}{c}
	z_{1}\\
	z_{2}\\
	z_{3}\\
	z_{4}
	\end{array}
	\right]
	=
	\left[
	\begin{array}{c}
	4\\
	9\\
	3\\
	4
	\end{array}
	\right]
	\].
	Since $ l_{ik} - l_{kk}\leq b^{\prime}_{i} - b^{\prime}_{k} $ for any $ 2 \leq i \leq 4 $ and $ 1 \leq k \leq i - 1 $, the solutions are $ Z \leq (b^{\prime}_{i} - l_{ii})_{i=1}^{4} =(b^{\prime}_{i})_{i=1}^{4} $, where $ b^{\prime}= P_{\sigma}b $:
	\[
	Z \leq \left[
	\begin{array}{c}
	4\\
	9\\
	3\\
	4
	\end{array}
	\right]
	\]
	We shall now solve the system $ UX=Z $:
	\[
	\left[
	\begin{array}{cccc}
	5&2&5&-2\\
	-\infty&7&8&1\\
	-\infty&-\infty&4&3\\
	-\infty&-\infty &-\infty&4\\
	\end{array}
	\right]
	\left[
	\begin{array}{c}
	x_{1}\\
	x_{2}\\
	x_{3}\\
	x_{4}
	\end{array}
	\right]
	=
	\left[
	\begin{array}{c}
	z_{1}\\
	z_{2}\\
	z_{3}\\
	z_{4}
	\end{array}
	\right]
	\].
	By Theorem~\ref{Uthm}, the solutions are $ X \leq (z_{i} -u_{ii})_{i=1}^{4} $ or in fact $ X \leq (b^{\prime}_{i} - u_{ii})_{i=1}^{4} $, since $u_{(4-i)k}-u_{kk} \leq b_{(4-i)}- b_{k}$ for any $1\leq i\leq 3$ and $ 4-i+1\leq k\leq 4$ :
	\[
	X \leq \left[
	\begin{array}{c}
	-1\\
	2\\
	-1\\
	0
	\end{array}
	\right]
	\].
	Note that the systems $AX=b$ and $(P_{\sigma}A)X=P_{\sigma}b$ have the same solutions.
\end{ex}
\begin{remark}
	Theorem~\ref{LUthm} shows that if the system $(LU)X=b$ does not have any solutions, then either the system $LZ=b$ or the system $UX=Z$ must not have any solutions.
\end{remark}
The next example shows that if the system $LZ=b$ has no solution, then neither does the system $AX=b$.
\begin{ex}	
	Let $ A\in M_{4}(S) $. Consider the following system $ AX=b $:	
	\[
	\left[
	\begin{array}{cccc}
	7&-1&3&0\\
	4&5&1&-2\\
	1&-6&2&-5\\
	-2&-9&-5&0
	
	\end{array}
	\right]
	\left[
	\begin{array}{c}
	x_{1}\\
	x_{2}\\
	x_{3}\\
	x_{4}
	\end{array}
	\right]
	=
	\left[
	\begin{array}{c}
	5\\
	2\\
	-1\\
	-9
	\end{array}
	\right]
	\].
	By Theorem~\ref{LUable}, $ A $ has an $LU-$factorization. It is fairly straightforward to verify that $ LZ=b $ has no solution, because $ l_{41}> b_{4}-b_{1}$. We have $a_{41}+x_{1} > b_{4}$, since $x_{1}= b_{1}-a_{11}$ and $l_{41}=a_{41}-a_{11}$ which implies $ AX=b $ not have any solutions.
\end{ex}
\begin{theorem}\label{LUnothm}
	Let $A \in M_{n}( S)$. Suppose $A$ has an $LU$-factorization. If the system $LZ=b$ has no solution, then neither does the system $AX=b$.
\end{theorem}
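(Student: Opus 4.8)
The plan is to prove the contrapositive: assuming that $AX=b$ admits a solution, I will exhibit a solution of $LZ=b$. This immediately gives the theorem, since it shows that the absence of any solution of $LZ=b$ forces the absence of any solution of $AX=b$. The whole argument rests on the factorization hypothesis $A=LU$ (in the sense of~\eqref{L} and~\eqref{U}) together with the associativity of matrix multiplication, which holds because $M_{n}(S)$ forms a semiring under matrix addition and matrix multiplication, as noted in Section~2.

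First I would suppose that $X^{*}\in S^{n}$ is a solution of $AX=b$, so that $AX^{*}=b$. Regarding $X^{*}$ as an $n\times 1$ matrix over $S$ and setting $Z^{*}:=UX^{*}$, I would then compute
\[ LZ^{*}=L(UX^{*})=(LU)X^{*}=AX^{*}=b, \]
where the second equality is exactly the associativity of the product applied to the triple $L$, $U$, $X^{*}$, and the third uses $A=LU$. Hence $Z^{*}$ is a solution of $LZ=b$. Contrapositively, if $LZ=b$ has no solution, then no such $X^{*}$ can exist, i.e.\ $AX=b$ has no solution, which is the claim.

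The only step requiring care is the associativity identity $(LU)X^{*}=L(UX^{*})$. I expect this to be the main (and essentially the only) obstacle, and it is routine rather than deep: unwinding both sides entrywise as $\bigoplus$-sums of $\otimes$-products over the intermediate summation index, and invoking the distributivity of $\otimes$ over $\oplus$ and the associativity of $\otimes$, shows that the two expressions agree term by term. Since the paper already records that $M_{n}(S)$ is a semiring, one may simply cite that associativity and apply it to the matrix--vector (rectangular) product used here. I would also remark that this argument explains conceptually why a failure of the $L$-system propagates to the full system independently of the particular entries of $L$ and $U$, consistent with the Remark following Theorem~\ref{LUthm} and with the preceding Example in which $l_{41}>b_{4}-b_{1}$ already blocks $AX=b$.
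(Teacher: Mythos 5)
Your proposal is correct, and it takes a genuinely different route from the paper. The paper argues in the forward direction and entrywise: assuming $LZ=b$ has no solution, it invokes Theorem~\ref{Lnothem} to produce indices $i,k$ with $l_{ik}-l_{kk}>b_{i}-b_{k}$ and $l_{ik}+z_{k}>b_{i}$, expands $z_{k}$ through the $k$-th equation of $UX=Z$, and then uses the factorization criterion~\eqref{LUcriteria} of Theorem~\ref{LUable} (which gives $a_{ik}-a_{kk}+a_{kj}\leq a_{ij}$ for $j>k$) to conclude that the $i$-th equation of $AX=b$ satisfies $\max(a_{i1}+x_{1},\cdots,a_{in}+x_{n})\geq l_{ik}+z_{k}>b_{i}$ for every $X$, so no $X$ can exist. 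In effect, the paper reconstructs by hand, along the single row $i$, the inequality $(L(UX))_{i}\leq (AX)_{i}$ that your associativity identity $L(UX^{*})=(LU)X^{*}=AX^{*}$ delivers all at once. Your contrapositive argument needs only $A=LU$ and associativity of the rectangular product over a semiring (which indeed holds by distributivity and associativity of $\otimes$, exactly as you sketch); it never uses triangularity of $L$ and $U$, the specific entries in~\eqref{L} and~\eqref{U}, the criterion~\eqref{LUcriteria}, regularity of $b$, or Theorem~\ref{Lnothem}. What the paper's computation buys is explicitness: it identifies which row of $AX=b$ is violated and by which inequality, consistent with the algorithmic emphasis of the paper and its Maple procedures. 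What your argument buys is generality and economy: the implication is exposed as a purely formal consequence of possessing \emph{any} factorization $A=LU$ over \emph{any} semiring, and the same one-line computation immediately gives the companion observation in the Remark following Theorem~\ref{LUthm}, namely that solvability of $AX=b$ forces solvability of the $L$-system.
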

\begin{proof}
	Assume that $LZ=b$ does not have any solutions. Then Theorem~\ref{Lnothem} implies that $l_{ik}- l_{kk} > b_{i}- b_{k}$ and $b_{i}- l_{ik} < z_{k} \leq b_{k}- l_{kk}$ for some $2\leq i\leq n$ and $1\leq k\leq i-1$. As such, we have $ l_{ik} + z_{k} > b_{i}$, where $z_{k}$ is obtained from the $k$-th equation of the system $UX=Z$. That means
	\begin{align}
	l_{ik}+z_{k}
	&\ \nonumber = l_{ik} + \max(u_{kk}+x_{k},\cdots,u_{kn}+x_{n}) \\
	&\ =\max (a_{ik}-a_{kk}+a_{kk}+x_{k},\cdots, a_{ik}-a_{kk}+a_{kn}+x_{n}).
	\end{align}	
	Since the matrix $A$ has an $LU$-factorization, then Theorem~\ref{LUable} leads to
	\begin{align*}
	(LU)_{ij}
	&\ = \max_{t=1}^{r}( det_{\varepsilon}(A[\{t,i\}] \mid \{t,j\}])- a_{tt})\\
	&\ = \max_{t=1}^{r}(\max (a_{ij}, a_{tj} -a_{tt}+a_{it}))\\
	&\ = a_{ij},
	\end{align*}
	where $r=\min \{ i,j \}-1$. As such, $a_{ik}-a_{kk}+a_{kj} \leq a_{ij}$, for any $k+1\leq j\leq n$, since $1\leq k\leq i-1$ $(k < \min\{i,j \})$. Due to~$(8)$, the following inequality is obtained
	\begin{center}
		$l_{ik}+z_{k} \leq \max (a_{ik}+x_{k},a_{i(k+1)}+x_{k+1},\cdots, a_{in}+x_{n})$.
	\end{center}
	Moreover, in the $i$-th equation of the system $LZ=b$, we have $l_{ik}+z_{k} > b_{i}$. Consequently, in the $i$-th equation of the system $AX=b$, we get $\max (a_{i1}+x_{1},\cdots, a_{in}+x_{n}) > b_{i}$ which means $AX=b$ has no solution.
\end{proof}
\begin{remark}
	Take the system $AX= b$. Let $A \in M_{n}(S)$ have an $LU$-factorization and $UX=Z$. If $LZ=b$ has some solution, but $UX=Z$ does not have any solutions, then $AX=b$ has no solution.
\end{remark}

\section{Extension of the idea to idempotent semifields}	
Throughout this section, let $S$ be an idempotent semifield and $A=(a_{ij}) \in M_{n}(S)$ with $det_{\epsilon}(A)=a_{11}\otimes ...\otimes a_{nn}$.  Take a lower triangular matrix $L$ and an upper triangular matrix $U$ over $S$ as follows.
\begin{equation*}
L=(l_{ij});~~
l_{ij}= \left
\{
\begin{array}{cc}
a_{ij}\otimes a_{jj}^{-1} & i\geq j\\
0 & \: i <j
\end{array}
\right.
\end{equation*}
and
\begin{equation*}
U=(u_{ij});~~
u_{ij}= \left
\{
\begin{array}{cc}
a_{ij} & i\leq j\\
0 & i > j
\end{array}
\right.
\end{equation*}\\
We say $A$ has an $LU$-factorization if $A=LU$.
\begin{theorem}
	Let $A \in M_{n}(S)$ and $n$ be a positive integer such that $det_{\varepsilon}(A)=a_{11} \otimes....\otimes a_{nn}$ and matrices $L$ and $U$ be defined as above. Then $A=LU$ if and only if for any $ 1 < i, j \leq n $ and $i\neq j$,
	$$a_{ij}= \bigoplus_{k=1}^{r} ( det_{\varepsilon} (A[\{k,i\} \mid \{k,j\}])\otimes a_{kk}^{-1}),$$
	where $r=min\{i,j\}-1$.
\end{theorem}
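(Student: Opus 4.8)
The plan is to mirror the proof of Theorem~\ref{LUable} exactly, since the final statement is simply the translation of that result from the additive notation of ``$\max$-plus algebra'' into the abstract multiplicative notation of a general idempotent semifield. The correspondence is the standard dictionary: $+$ becomes $\otimes$, $\max$ becomes $\bigoplus$, subtraction $a-b$ becomes $a \otimes b^{-1}$, and the additive identity $-\infty$ becomes the semifield zero $0$. Everything used in the max-plus proof, namely the extremal property of addition, the compatibility of the total order with the operations, and multiplicative cancellativity, is available here since $S$ is an idempotent semifield and hence (being a semifield) multiplicatively cancellative on nonzero elements.

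First I would verify $A = LU$ by computing $(LU)_{ij} = \bigoplus_{k=1}^{n}(l_{ik} \otimes u_{kj})$ and splitting into the same four cases as before. For $i=1$ or $j=1$, the off-diagonal $L$ or $U$ entries are $0$ (the absorbing zero), so the sum collapses to the single surviving term and yields $a_{1j}$ or $a_{i1}$. For the diagonal case $i=j$, the sum runs effectively from $k=1$ to $i$, giving $\bigoplus_{k=1}^{i}(a_{ik} \otimes a_{kk}^{-1} \otimes a_{ki})$; here I invoke the hypothesis $det_{\varepsilon}(A) = a_{11} \otimes \cdots \otimes a_{nn}$, which forces $a_{ik} \otimes a_{ki} \leq_{S} a_{ii} \otimes a_{kk}$ for each transposition $(ik)$, and then multiplicative cancellativity upgrades this to the equality $a_{ik}\otimes a_{ki} = a_{ii}\otimes a_{kk}$, so that each term equals $a_{ii}$ and the idempotent sum is $a_{ii}$.

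For the main off-diagonal case $i,j>1$, $i\neq j$, the sum reduces to $\bigoplus_{k=1}^{\min\{i,j\}}(a_{ik}\otimes a_{kk}^{-1}\otimes a_{kj})$; I would peel off the top term $k=\min\{i,j\}$, which equals $a_{ij}$ exactly as in the original proof (since one of $a_{ik}$ or $a_{kj}$ is then the pivot $a_{jj}$ or $a_{ii}$), and rewrite the remaining terms. Absorbing the pivot $a_{kk}\otimes a_{kk}^{-1}=1$ into the factored-out term lets me combine the sum into $\bigoplus_{k=1}^{r}\bigl((a_{ik}\otimes a_{kj}) \oplus (a_{ij}\otimes a_{kk})\bigr)\otimes a_{kk}^{-1}$ with $r=\min\{i,j\}-1$, and recognizing $(a_{ik}\otimes a_{kj})\oplus(a_{ij}\otimes a_{kk}) = det_{\varepsilon}(A[\{k,i\}\mid\{k,j\}])$ gives precisely the displayed criterion, so invoking the hypothesis closes the case. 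The converse is immediate: if $A=LU$ then reading off $(LU)_{ij}$ through the same computation recovers the stated identity.

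The genuinely delicate step is the diagonal case, where the argument that $det_{\varepsilon}(A)=a_{11}\otimes\cdots\otimes a_{nn}$ yields the cancellation $a_{ik}\otimes a_{ki}=a_{ii}\otimes a_{kk}$ relies on multiplicative cancellativity in $S$. In the max-plus setting this is transparently true, but in a general idempotent semifield I would want to confirm that cancellativity holds: it does, because every nonzero element is invertible and the pivots $a_{kk}$ are nonzero (guaranteed by $det_{\varepsilon}(A)=a_{11}\otimes\cdots\otimes a_{nn}\neq 0$ being the relevant permutation product). I expect no new ideas beyond this bookkeeping, but care is needed to ensure that all pivots $a_{kk}$ are nonzero throughout, so that $a_{kk}^{-1}$ is well-defined and cancellation is legitimate; this is where the bulk of the rigor, as opposed to routine symbol-pushing, resides.
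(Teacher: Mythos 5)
Your proposal is correct and takes essentially the same route as the paper: the paper's entire proof of this theorem is the remark that, since $S$ is additively idempotent with a total order compatible with both operations, the argument of Theorem~\ref{LUable} transfers unchanged, and your write-up simply executes that dictionary translation case by case. One caveat, which you inherit from the paper's own proof of Theorem~\ref{LUable}: cancellativity yields only the inequality $a_{ik}\otimes a_{ki}\leq_{S} a_{ii}\otimes a_{kk}$, not the equality you (and the paper) assert, but the inequality suffices because the $k=i$ term of the idempotent sum is exactly $a_{ii}$ and every other term is then at most $a_{ii}$.
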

\begin{proof}
	Note that  $ S $ is additively idempotent and the total order $ ``\leq_{S}" $ on $ S $ is  compatible with addition and multiplication. Moreover, the total order $ ``\leq_{S}" $ induces a partial order on $ S $, which is stated in Section~2. Considering these properties, the proof is similar to that of Theorem~\ref{LUable}.
\end{proof}
\begin{theorem}\label{ELthm}
	Let $LX=b $ be a linear system of equations with a lower triangular matrix $ L \in M_{n} (S)$ and a regular vector $ b \in S^{n}$. Then the system $LX=b $ has solutions $ X \leq _{S} (b_{i}\otimes l_{ii}^{-1} )_{i=1} ^{n}$ if $ l_{ik} \otimes l_{kk}^{-1} \leq _{S} b_{i} \otimes b_{k}^{-1} $, for any $ 2\leq i\leq n$ and $ 1\leq k\leq i-1$.\\
	Moreover, if all the inequalities are proper, then the maximal solution $ X=(b_{i} \otimes l_{ii}^{-1}) $ of the system is unique.
\end{theorem}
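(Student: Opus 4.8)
The plan is to mirror the proof of Theorem~\ref{Lthm} line for line, replacing the max--plus operations by their idempotent-semifield counterparts: the subtraction $a-b$ becomes $a \otimes b^{-1}$, the $\max$ becomes $\oplus$, and $\leq$ becomes $\leq_{S}$. The three structural facts that make this translation legitimate are all recorded in Section~2: $(S,\oplus)$ is additively idempotent, so $\oplus$ is the join for $\leq_{S}$ (equivalently $a \leq_{S} b \Longleftrightarrow a \oplus b = b$); the total order is compatible with $\otimes$, so that $a \leq_{S} b$ implies $a \otimes c \leq_{S} b \otimes c$; and every nonzero element is invertible, so that $l_{ii}^{-1}$ and $b_{k}^{-1}$ exist ($b$ is regular and we may assume each $l_{ii} \neq 0$). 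First I would unwind the $i$-th equation of $LX=b$ to $\bigoplus_{k=1}^{i}(l_{ik} \otimes x_{k}) = b_{i}$, using that $l_{ik}=0$ for $k>i$ together with the absorbing property $0 \otimes x_{k}=0$.

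I would then run induction on the row index. The base case $i=1$ reduces the first equation to $l_{11} \otimes x_{1}=b_{1}$, forcing $x_{1}=b_{1}\otimes l_{11}^{-1}$. For the inductive step I assume $x_{1}=b_{1}\otimes l_{11}^{-1}$ and $x_{k}\leq_{S} b_{k}\otimes l_{kk}^{-1}$ for all $k\leq m-1$. The crux is the rearrangement of the hypothesis: from $l_{mk}\otimes l_{kk}^{-1}\leq_{S} b_{m}\otimes b_{k}^{-1}$ I multiply through by $b_{k}$ (order-compatibility of $\otimes$) to obtain $l_{mk}\otimes(b_{k}\otimes l_{kk}^{-1})\leq_{S} b_{m}$, and combine this with the inductive bound to conclude $l_{mk}\otimes x_{k}\leq_{S} b_{m}$ for every $k\leq m-1$. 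Since $\oplus$ is the join, the off-diagonal part $\bigoplus_{k=1}^{m-1}(l_{mk}\otimes x_{k})$ is then $\leq_{S} b_{m}$, so the $m$-th equation forces $l_{mm}\otimes x_{m}\leq_{S} b_{m}$, that is $x_{m}\leq_{S} b_{m}\otimes l_{mm}^{-1}$. Existence of a solution attaining these bounds follows by substituting $X^{*}=(b_{i}\otimes l_{ii}^{-1})_{i=1}^{n}$: the diagonal term of the $i$-th equation is $l_{ii}\otimes b_{i}\otimes l_{ii}^{-1}=b_{i}$, while each off-diagonal term is $\leq_{S} b_{i}$ by the same computation, so the join equals $b_{i}$ and $X^{*}$ solves the system.

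For the uniqueness claim I assume all inequalities are strict. Propagating the induction with $x_{k}=b_{k}\otimes l_{kk}^{-1}$ for $k<i$, strictness upgrades each off-diagonal term to $l_{ik}\otimes x_{k}<_{S} b_{i}$; since $\oplus$ is the join and every summand other than the diagonal one is strictly below $b_{i}$, the diagonal summand $l_{ii}\otimes x_{i}$ must equal $b_{i}$, pinning $x_{i}=b_{i}\otimes l_{ii}^{-1}$ and giving uniqueness of the maximal solution. I expect no deep obstacle here: the only point demanding genuine care is that the ``cancellation'' steps be phrased purely through order-compatibility of $\otimes$ and through invertibility, never through an illegal subtraction, and that the passage from ``all summands $\leq_{S} b_{i}$ (respectively $<_{S} b_{i}$)'' to a statement about the diagonal term rest squarely on the join interpretation of $\oplus$ supplied by additive idempotency. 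This last structural step is precisely what replaces the elementary arithmetic of the max--plus proof, and it is the one place where the general idempotent-semifield hypotheses are essential.
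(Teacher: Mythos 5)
Your proposal is correct and follows essentially the same route as the paper: an induction on the row index that translates the proof of Theorem~\ref{Lthm} into the idempotent-semifield setting, with subtraction replaced by multiplication by inverses and the order-compatibility of $\otimes$ and the join interpretation of $\oplus$ doing the work (the paper's own proof does the base case $i=2$, sets up the induction hypothesis, and then defers to Theorem~\ref{Lthm} for the rest). If anything, you are more careful than the paper, since you explicitly verify that the candidate $X^{*}=(b_{i}\otimes l_{ii}^{-1})_{i=1}^{n}$ actually satisfies the system — a step the paper leaves implicit.
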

\begin{proof}
	The proof is by induction on $ i $. For $ i=2 ~ (k=1), $ since $ l_{21}\otimes l_{11}^{-1} \leq _{S} b_{2} \otimes b_{1}^{-1} $ and $x_{1}=b_{1}\otimes l_{11}^{-1} $, we have $(l_{21}\otimes x_{1}) \leq _{S} b_{2}$. As such, the second equation of the system, $ (l_{21}\otimes x_{1}) \oplus (l_{22} \otimes x_{2}) =b_{2}, $ implies  $ x_{2} \leq _{S} b_{2} \otimes l_{22}^{-1}  $.\\
	Suppose that for $ i \leq m-1 $, the statements are true , i.e. $ x_{1}=b_{1}\otimes l_{11}^{-1} $ and $ x _{i} \leq _{S} b_{i}\otimes l_{ii}^{-1} $, for any $2 \leq i \leq m-1 $ . Let  $ i=m,(k=1, \cdots ,m-1)$. Then the proof is similar to that of Theorem~\ref{Lthm}. Hence, the system $ LX=b $ has solutions  $ X \leq _{S} (b_{i} \otimes l_{ii}^{-1})$.
\end{proof}
\begin{remark}\label{remextend}
	Similarly, we can extend Theorems~\ref{Lnothem}, \ref{Uthm}, \ref{Unothm}, \ref{LUthm} and \ref{LUnothm} to idempotent semifields as follows.
\end{remark}
\begin{enumerate}
	\item Let $LX=b $ be a linear system of equations with a lower triangular matrix $ L \in M_{n} (S)$ and a regular vector $ b \in S^{n}$. Then the system $LX=b $ has no solution if and only if $l_{ik}\otimes l_{kk}^{-1} >_{S} b_{i}\otimes b_{k}^{-1}$
	and $b_{i} \otimes l_{ik}^{-1} <_{S} x_{k} \leq _{S} b_{k} \otimes l_{kk}^{-1}$ for some $2\leq i\leq n$ and $1\leq k\leq i-1$.
	\item Let $ UX=b $ be a linear system of equations with an upper triangular matrix $U \in M_{n}(S)$ and a regular vector $b \in S^{n}$. Then the system $UX=b $ has solutions $X \leq_{S} (b_{j} \otimes u_{jj}^{-1})_{j=1}^{n}$ if $u_{(n-i)k} \otimes u_{kk}^{-1} \leq _{S} b_{(n-i)} \otimes b_{k}^{-1}$, for any $1\leq i\leq n-1$, $n-i+1\leq k\leq n$. Moreover, if the inequalities $u_{(n-i)k} \otimes u_{kk}^{-1} \leq _{S} b_{(n-i)} \otimes b_{k}^{-1}$ are proper, then the maximal solution $X=(x_{j} \otimes u_{jj}^{-1})_{j=1}^{n}$ of the U-System $UX=b $ is unique.
	\item Let $ UX=b $ be a linear system of equations with an upper triangular matrix $U \in M_{n}(S)$ and a regular vector $b \in S^{n}$. Then the system $UX=b$ has no solution if and only if $u_{(n-i)k} \otimes u_{kk}^{-1} >_{S} b_{n-i} \otimes b_{k}^{-1}$ and $b_{n-i} \otimes u_{(n-i)k}^{-1} \leq _{S} x_{k} \leq _{S} b_{k} \otimes u_{kk}^{-1}$ for some $1\leq i\leq n-1$ and $n-i+1\leq k\leq n$.
	\item	Let $ A \in M_{n}(S) $ and suppose $ A $ has $LU$ factors given by~\eqref{L} and~\eqref{U}, respectively. The system $ AX=b $ has the solutions $ X \leq _{S} (b_{i} \otimes a_{ii}^{-1})_{i=1}^{n}$ if $ a_{ik} \otimes a_{kk}^{-1} \leq _{S} b_{i} \otimes b_{k}^{-1} $ and $ a_{(n-j)l} \otimes a_{ll}^{-1} \leq _{S} b_{(n-j)} \otimes b_{l}^{-1} $, for every  $2\leq i\leq n$, $1\leq k\leq i-1$, $1\leq j\leq n-1$, and $n-j+1\leq l\leq n$. Furthermore, the system $ AX=b $ has the unique solution  $ X= (b_{i} \otimes a_{ii}^{-1})_{i=1}^{n}$ if the inequalities $ a_{ik} \otimes a_{kk}^{-1} \leq _{S} b_{i} \otimes b_{k}^{-1} $ and $ a_{(n-j)l} \otimes a_{ll}^{-1} \leq _{S} b_{(n-j)} \otimes b_{l}^{-1} $ are proper.
	\item	Let $A \in M_{n}( S)$. Suppose $A$ has an $LU$-factorization. If the system $LZ=b$ has no solution, then neither does the system $AX=b$.
\end{enumerate}
\begin{ex}	
	Let $ A\in M_{4}(S) $ where $S=\mathbb{R}_{\min, \times}$. Consider the following system $ AX=b $:	
	\[
	\left[
	\begin{array}{cccc}
	1&6&9&8\\
	6&2&7&5\\
	9&7&1&7\\
	8&5&6&3
	\end{array}
	\right]
	\left[
	\begin{array}{c}
	x_{1}\\
	x_{2}\\
	x_{3}\\
	x_{4}
	\end{array}
	\right]
	=
	\left[
	\begin{array}{c}
	4\\
	6\\
	1\\
	6
	\end{array}
	\right],
	\]
	where $det_{\epsilon}(A)=a_{11} \times a_{22} \times a_{33} \times a_{44}=6$. We use the presented $ LU $-method to solve this system. Due to the extension of the Theorem~\ref{LUthm} in Remark~\ref{remextend}, we must first solve the system $ LZ=b $:
	\[
	\left[
	\begin{array}{cccc}
	1&+\infty&+\infty&+\infty\\
	6&1&+\infty&+\infty\\
	9&\frac{7}{2}&1&+\infty\\
	8&\frac{5}{2}&6&1\\
	\end{array}
	\right]
	\left[
	\begin{array}{c}
	z_{1}\\
	z_{2}\\
	z_{3}\\
	z_{4}
	\end{array}
	\right]
	=
	\left[
	\begin{array}{c}
	4\\
	6\\
	1\\
	6
	\end{array}
	\right]
	\]
	Since $ l_{ik} \otimes l_{kk}^{-1} \leq_{S} b_{i} \otimes b_{k}^{-1} $ for any $ 2 \leq i \leq 4 $ and $ 1 \leq k \leq i - 1 $, the solutions are $ Z\leq_{S} (b_{i} \otimes l_{ii}^{-1})_{i=1}^{4} =(b_{i})_{i=1}^{4} $:
	\[
	Z\leq_{S} \left[
	\begin{array}{c}
	4\\
	6\\
	1\\
	6
	\end{array}
	\right]
	\]
	We shall now solve the system $ UX=Z $:
	\[
	\left[
	\begin{array}{cccc}
	1&6&9&8\\
	+\infty&2&7&5\\
	+\infty&+\infty&1&7\\
	+\infty&+\infty &+\infty&3\\
	\end{array}
	\right]
	\left[
	\begin{array}{c}
	x_{1}\\
	x_{2}\\
	x_{3}\\
	x_{4}
	\end{array}
	\right]
	=
	\left[
	\begin{array}{c}
	z_{1}\\
	z_{2}\\
	z_{3}\\
	z_{4}
	\end{array}
	\right].
	\]
	By the extension of the Theorem~\ref{Uthm} in Remark~\ref{remextend}, the solutions are $ X \leq_{S} (z_{i} \otimes u_{ii}^{-1})_{i=1}^{4} $ or in fact $ X \leq_{S} (b_{i} \otimes u_{ii}^{-1})_{i=1}^{4} $, since $u_{(4-i)k} \otimes u_{kk}^{-1}  \leq_{S} b_{4-i} \otimes b_{k}^{-1}$ for any $1\leq i\leq 3$ and $ 4-i+1\leq k\leq 4$ :
	\[
	X \leq_{S} \left[
	\begin{array}{c}
	4\\
	3\\
	1\\
	2
	\end{array}
	\right].
	\]
	It should be noted  that $ a \leq_{S} b $ means that $ a\geq b $ for any $ a,b \in \mathbb{R}_{\min , \times} $, where $ ``\geq" $ is the standard greater than or equal relation over $ \mathbb{R_{+}}$.
\end{ex}
\section{Concluding Remarks}\label{remarks}
In this paper, we extended the $LU$-factorization technique to idempotent semifields. We stated the criteria under which a matrix can have $LU$ factors, and when it does, what the factors look like. Importantly, we used the results in solving linear systems of equations when the solution exists. One can use these $LU$ factors in the design of numerical algorithms in idempotent semifields. Other important properties of these $LU$ factors can also be studied especially relative to well-known classic results from linear algebra.

\newpage
\appendix
\begin{center}
	\begin{table}[!tb]
		\begin{verbatim}
			MaxPlusDet := proc (A::Matrix) 
		local i, j, s, n, detA, ind, K, V;
		description "This program finds the determinant of a square matrix in max-plus.";
		Use LinearAlgebra in
		n := ColumnDimension(A);
		V := Matrix(n);
		ind := Vector(n);
		if n = 1 then 
		   V := A[1, 1]; 
		   detA := V; 
		   ind[1] := 1 
		elif n = 2 then
		   V[1, 1] := A[1, 1]+ A[2, 2];
		   V[1, 2] := A[1, 2]+ A[2, 1]; 
		   V[2, 1] := A[1, 2]+ A[2, 1]; 
		   V[2, 2] := A[1, 1]+ A[2, 2]; 
		   detA := max(V);
		   for s to 2 do 
		     K := V[s, 1 .. 2]; 
		     ind[s] := max[index](K) 
		   end do;
		else 
		   for i to n do
		     for j to n do
		       V[i, j] := A[i, j]+ op(1, MaxPlusDet(A[[1 .. i-1, i+1 .. n], [1 .. j-1, j+1 .. n]])); 
		     end do; 
		     detA := max(V); 
		     K := V[i, 1 .. n];
		     ind[i] := max[index](K);
		   end do; 
		end if;
		end use:
		[detA, ind, V]
		end proc:
		\end{verbatim}
		\caption{Finding the determinant of a square matrix in max-plus} \label{tab1}
	\end{table}
\end{center}

\begin{center}
	\begin{table}[!tb]
		\begin{verbatim}
		Pmat := proc (A::Matrix) 
		local i, n, l, V, d, L, j, L1, P;
		description "This program finds the permutation matrix based on the determinant of a square matrix in max-plus.";
		use LinearAlgebra in
		n := ColumnDimension(A); 
		d := op(1, MaxPlusDet(A));
		P := Matrix(1 .. n, 1 .. n, (-1)*Float(infinity)); 
		V := op(3, MaxPlusDet(A));
		L := [];
		L1 := []; 
		for i to n do
		   for j to n do
		    if V[i, j] = d then
		       if j in L then 
		         L1 := [op(L1), j]
		       else
		         L := [op(L), j];
		         break 
		       end if; 
		     end if;
		   end do; 
		   P[j, i] := 0 
		end do;
		end use:
		P
		end proc:
		\end{verbatim}
		\caption{Finding the permutation matrix of a square matrix in max-plus} \label{tab2}
	\end{table}
\end{center}

\begin{center}
	\begin{table}[!tb]
		\begin{verbatim}
		Matmul := proc (A::Matrix, B::Matrix)
		local i, j, m, n, p, q, C, L;
		description "This program finds the multiplication of two matrices in max-plus.";
		Use LinearAlgebra in
		m := RowDimension(A);
		n := ColumnDimension(A); 
		p := RowDimension(B);
		q := ColumnDimension(B); 
		C := Matrix(m, q);
		if n <> p then
		   print('impossible');
		   break 
		else
		   for i to m do
		     for j to q do
		       L := [seq(A[i, k]+B[k, j], k = 1 .. n)];
		       C[i, j] := max(L)
		     end do
		   end do
		end if;
		end use:
		C 
		end proc:
		\end{verbatim}
		\caption{Calculation of matrix multiplication in max-plus} \label{tab3}
	\end{table}
\end{center}

\begin{center}
	\begin{table}[!tb]
		\begin{verbatim}
		maxLU := proc (A::Matrix) 
		local i, j, m, n, k, h, V, L, U, P, B, s, r;
		Use LinearAlgebra in
		n := ColumnDimension(A);
		L := Matrix(1 .. n, 1 .. n, (-1)*Float(infinity));
		U := Matrix(1 .. n, 1 .. n, (-1)*Float(infinity));
		P := Pmat(A);
		B := Matmul(P, A);
		for h to n do
		   L[h, 1] := B[h, 1]-B[1, 1];
		   U[1, h] := B[1, h];
		   U[h, h] := B[h, h];
		   L[h, h] := 0;
		end do; 
		for i from 2 to n do 
		   for j from 2 to n do
		     if i <> j then
	 	     r := min(i, j)-1;
		       V := Vector(r); 
		       for k to r do
		         V[k] := max(B[i, k]+B[k, j], B[i, j]+B[k, k])-B[k, k];
		       end do;
		       s[i, j] := max(V); 
		       if s[i, j] = B[i, j] then 
		         if j < i then
		           L[i, j] := B[i, j]-B[j, j];
		         elif i < j then
		           U[i, j] := B[i, j];
		         end if;
		       else 
		         print('No solution');
		         break
		       end if;
		     end if;
		   end do;
		end do;
		end use:
		[P, L, U] 
		end proc:
		\end{verbatim}
		\caption{Calculating the LU factors of a square matrix in max-plus} \label{tab4}
	\end{table}
\end{center}

\begin{center}
	\begin{table}[!tb]
		\begin{verbatim}
		maxLsystem:=proc(L::Matrix, b::Vector)
		local i, k, n, x, c, V;
		description "This program solves a lower triagular system in max-plus.";
		use LinearAlgebra in  
		n:= ColumnDimension(L);
		x := Vector(n);
		c[1] := 1; \#(Equality Flag)
		x[1] := b[1]-L[1, 1];
		for i from 2 to n do
		   for k  to i-1 do    
		     V:= Vector(i-1);   
		     if x[k]<b[i]-L[i, k] then
		       V[k]:= 1;
		       x[i]:= b[i]-L[i,i];
		     elif x[k]= b[i]-L[i, k] then     
		       x[i]:= b[i]-L[i,i];
	 	     print('x'[i]<= b[i]- L[i, i]);   
		     elif x[k]> b[i]-L[i, k] then     
		       if c[k]= 1 then     
		         print('no solution');     
		         break   
		       elif c[k]=0 then 
		         x[k]:= b[i]-L[i, k];     
		         x[i]:= b[i]- L[i, i];     
		         V[k]:= 1;
		         print('x'[k]<= b[i]- L[i, k];    
		       end if;   
		     end if;    
		     if max(V)= 1 then    
		       c[i]:= 1;    
		     else    
		       c[i]:=0;
		     end if:
		   end do:  
		end do:
		end use:
		x
		end proc:     
		\end{verbatim}
		\caption{Solving the system $LX= b$ in max-plus} \label{tab5}
	\end{table}
\end{center}

\begin{center}
	\begin{table}[!tb]
		\begin{verbatim}
		maxUsystem:=proc(U::Matrix, b::Vector)
		local i, k, n, x, c, V;
		description "This program solves an upper triagular system in max-plus.";
		use LinearAlgebra in  
		n:= ColumnDimension(U);
		x := Vector(n);
		c[n] := 1; \#(Equality Flag)
		x[n] := b[n]-U[n, n];
		for i from 2 to n do
		   for k  to i-1 do
		     V:= Vector(i-1);
		     if x[n+1- k]<b[n+1- i]-U[n+1- i, n+1- k] then 
		       V[n+1- k]:= 1;     
		       x[n+1- i]:= b[n+1- i]-U[n+1-i, n+1- i];   
		     elif x[n+1- k]= b[n+1- i]-U[n+1- i, n+1- k] then     
		       x[n+1- i]:= b[n+1- i]-U[n+1- i,n+1- i];
		       print('x'[n+1- i]<= b[n+1- i]- U[n+1- i, n+1- i]);
		     elif x[n+1- k]> b[n+1- i]-U[n+1- i, n+1- k] then     
		       if c[n+1- k]= 1 then     
		         print('no solution');     
		         break    
		       elif c[n+1- k]=0 then     
		         x[n+1- k]:= b[n+1- i]-U[n+1- i, n+1- k];     
		         x[n+1- i]:= b[n+1- i]- U[n+1- i, n+1- i];     
		         V[n+1- k]:= 1;     
		         print('x'[n+1- k]<= b[n+1- i]- U[n+1- i, n+1- k];  
		       end if;   
		     end if;    
		     if max(V)= 1 then   
		       c[n+1-i]:= 1;    
		     else     
		       c[n+1-i]:=0;   
		     end if:
		   end do:
		end do:  
		end use:
		x
		end proc:     
		\end{verbatim}
		\caption{Solving the system $UX= b$ in max-plus} \label{tab6}
	\end{table}
\end{center}

\end{document}